\newtheorem{theorem}{Theorem}
\newtheorem{lemma}[theorem]{Lemma}
\newtheorem{question}[theorem]{Question}
\newtheorem{corollary}[theorem]{Corollary}
\newtheorem{proposition}[theorem]{Proposition}
\theoremstyle{definition}
\newtheorem*{fpsingconj}{$\Fp$-Singer Conjecture}
\newtheorem*{singconj}{Singer Conjecture}
\newtheorem*{luckconj}{L\"uck Conjecture}
\newtheorem*{bvconj}{Bergeron--Venkatesh Conjecture}
\newtheorem*{acknowledgements}{Acknowledgements}
\newtheorem*{remark}{Remark}
\DeclareMathOperator{\Lk}{Lk}
\DeclareMathOperator{\St}{St}
\newcommand{\ga}{\alpha}
\newcommand{\gs}{\sigma}
\newcommand{\gG}{\Gamma}
\newcommand{\gS}{\Sigma}
\newcommand{\R}{\mathbb{R}}
\newcommand{\Z}{\mathbb{Z}}
\newcommand{\Q}{\mathbb{Q}}
\newcommand{\Fp}{\mathbb{F}_{p}}
\newcommand{\F}{\mathbb{F}}
\title[Torsion and nonpositive curvature]
{Mod p and torsion homology growth in nonpositive curvature}
\author{Grigori Avramidi}
\address{Max Planck Institute for Mathematics\\
Bonn\\
Germany, 53111}
\email{gavramidi@mpim-bonn.mpg.de}
\author{Boris Okun}
\address{Department of Mathematical Sciences\\
University of Wisconsin--Milwaukee\\
Milwaukee,WI 53201}
\email{okun@uwm.edu}
\author{Kevin Schreve}
\address{Department of Mathematics\\University of Chicago\\Chicago, IL 60637}
\email{kschreve@math.uchicago.edu}
\begin{document}\begin{abstract}
	We compute the mod $p$ homology growth of residual sequences of finite index normal subgroups of right-angled Artin groups.
	We find examples where this differs from the rational homology growth, which implies the homology of subgroups in the sequence has lots of torsion.
	More precisely, the homology torsion grows exponentially in the index of the subgroup.
	For odd primes $p$, we construct closed locally CAT(0) manifolds with nonzero mod $p$ homology growth outside the middle dimension.
	These examples show that Singer's conjecture on rational homology growth and L\"uck's conjecture on torsion homology growth are incompatible with each other, so at least one of them must be wrong.
\end{abstract}
\maketitle

This paper is about the growth of homology in regular coverings of finite aspherical complexes $X=B\Gamma$.
We will content ourselves with the situation when the fundamental group $\Gamma=\pi_1X$ is residually finite.
This means there is a nested sequence of finite index normal subgroups $\gG_k\vartriangleleft\Gamma$ with $\cap_{k}\gG_k=1$.
We fix a choice of such a sequence and will be interested in the normalized limits of Betti numbers with coefficients in a field $F$
\[
	b^{(2)}_{i}(\Gamma;F):=\limsup_{k} \frac{b_{i}(B\gG_k; F)}{[\Gamma:\gG_k]}
\]
where $F$ is either $\Q$ or $\Fp$.
When $F=\Q$ then L{\"u}ck's approximation theorem \cite{luck} shows this does not depend on the choice of sequence and can be identified with a more analytically defined $i$-th {\it $L^2$-Betti number} of the universal cover $E\Gamma$.
When $F=\Fp$ we will analogously refer to $b_{i}^{(2)}(\Gamma;\Fp)$ as the \emph{ $\Fp$-$L^2$-Betti number}, even though it does not (as far as we know) have an analytic interpretation and it is not even known whether the $\limsup$ depends on the choice of sequence (we abuse notation by omitting the sequence from $b_{i}^{(2)}(\Gamma;\Fp)$).
Note that if the $\limsup$ is independent of the sequence, then it becomes an honest limit.

For a finite aspherical complex $B\Gamma$ it is easy to see that the mod $p$ $L^2$-Betti number is greater or equal to the ordinary $L^2$-Betti number
\[
	b^{(2)}_i(\Gamma;\Fp)\geq b^{(2)}_i(\Gamma;\Q),
\]
but it might be strictly bigger.
We show that this does---in fact---happen for some right-angled Artin groups.
This seems to have not been observed previously and contradicts a conjecture of L{\"u}ck [Conjecture 3.4, \cite{luck1}] that these numbers are independent of the coefficient field.
When $\Gamma$ is a right-angled Artin group we compute $b_{i}^{(2)}(\Gamma;F)$ completely for any coefficient field, via a residually finite variant of the argument Davis and Leary \cite{davisleary} used to compute the ordinary $L^2$-Betti numbers of such groups.
\begin{theorem}\label{kl2betti}
	Let $A_L$ be a right-angled Artin group with defining flag complex $L$ and $F$ any field (e.g.
	$\Q$ or $\Fp$).
	Then
	\[
		b_{i}^{(2)}(A_L;F)=\bar b_{i-1}(L;F).
	\]
\end{theorem}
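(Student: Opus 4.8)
The plan is to carry out the Davis--Leary computation of $L^{2}$-Betti numbers of right-angled Artin groups one vertex at a time, replacing von Neumann dimensions throughout by normalized $F$-Betti numbers, with one new algebraic input to accommodate the field $\Fp$ (and in fact an arbitrary field). I would induct on the number of vertices of $L$; the base cases $L=\emptyset$ (so $A_L=1$ and $b_0=1=\bar b_{-1}(\emptyset;F)$) and $L$ a point ($A_L=\Z$, all normalized Betti numbers tending to $0$) are immediate. For the inductive step, fix a vertex $v$, let $L'=L\smallsetminus v$ be the full subcomplex on the remaining vertices, and use $\St(v)=v*\Lk(v)$; then the Salvetti complex decomposes as a pushout $S_L=S_{L'}\cup_{S_{\Lk(v)}}S_{\St(v)}$ of aspherical complexes along $\pi_1$-injective subcomplexes, and $A_L=A_{L'}*_{A_{\Lk(v)}}A_{\St(v)}$ with $A_{\St(v)}\cong\Z\times A_{\Lk(v)}$. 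The new ingredient is the following lemma: if $G$ is residually finite, $g\in G$ has infinite order, and $\gG_k\vartriangleleft G$ is a residual chain, then multiplication by $1-\bar g$ on $F[G/\gG_k]$ has kernel and cokernel of $F$-dimension equal to the number of $\langle\bar g\rangle$-orbits on $G/\gG_k$, i.e.\ $[G:\gG_k]/d_k$ where $d_k$ is the order of $g$ modulo $\gG_k$; since $\cap_k\gG_k=1$ forces $d_k\to\infty$, both are $o([G:\gG_k])$. (Over any field, each orbit contributes a summand $F[t]/(t^{d}-1)$ on which $1-t$ has one-dimensional kernel and cokernel.)

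Next I would pass to the cellular chain complex of $\widetilde{S_L}$: its $j$-cells are permuted simply transitively by $A_L$ and indexed by the $(j-1)$-simplices of $L$ (the empty simplex giving the $0$-cell), with Koszul-type differential $\partial e_{\gs}=\sum_i(-1)^i(1-v_i)e_{\gs\smallsetminus v_i}$, and $C_*(\widetilde{S_L};F)\otimes_{F[A_L]}F[A_L/\gG_k]$ computes $H_*(B\gG_k;F)$. The pushout above exhibits this complex, up to shift, as the mapping cone of the inclusion-induced map
\[
	C_{\Lk}\ \longrightarrow\ C_{L'}\oplus C_{\St},
\]
where $C_{\Lk},C_{L'},C_{\St}$ are the cellular $F$-chains of the preimages in $B\gG_k$ of $S_{\Lk(v)},S_{L'},S_{\St(v)}$. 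Because $S_{\St(v)}=S^1\times S_{\Lk(v)}$, the complex $C_{\St}$ is a tensor product of the two-term complex $F[A_L/\gG_k]\xrightarrow{1-\bar v}F[A_L/\gG_k]$---normalized acyclic by the lemma---with a complex of free $F$-modules, hence is normalized acyclic and drops out of the cone in the limit. Restricting $\gG_k$ gives residual chains $\gG_k\cap A_{L'}\vartriangleleft A_{L'}$ and $\gG_k\cap A_{\Lk(v)}\vartriangleleft A_{\Lk(v)}$, and since $\gG_k$ is normal the subcomplexes $S_{L'}$ and $S_{\Lk(v)}$ occur in $B\gG_k$ each with multiplicity exactly the relevant ratio of indices, so these multiplicities cancel after dividing by $[A_L:\gG_k]$; by the inductive hypothesis the normalized Betti numbers of $C_{L'}$ and $C_{\Lk}$ then converge to $\bar b_{*-1}(L';F)$ and $\bar b_{*-1}(\Lk(v);F)$. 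Comparing with the Mayer--Vietoris sequence of $L=L'\cup\St(v)$ gives $b^{(2)}_i(A_L;F)=\bar b_{i-1}(L;F)$.

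The main obstacle, and the place where the argument genuinely departs from Davis--Leary, is that normalized $\Fp$-Betti numbers are not known to be semicontinuous in finite covers or to be additive along exact sequences, so the mapping-cone bookkeeping must be executed with honest finite covers and quantitative estimates rather than with a well-behaved dimension function. Concretely, the crux is to show that the normalized rank of the Mayer--Vietoris map $H_{i-1}(C_{\Lk})\to H_{i-1}(C_{L'})$ converges to the rank of the map $\bar H_{i-1}(\Lk(v);F)\to\bar H_{i-1}(L';F)$ induced by inclusion---equivalently, that the normalized ranks of the Koszul boundary maps over $F[A_L/\gG_k]$ converge to the ranks of the simplicial boundary maps of $L$. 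I expect this to follow from the lemma by the same induction, the normalized invertibility of each $1-\bar v$ preventing its block of $\partial$ from losing rank; but making these perturbation estimates uniform in $k$ and checking their compatibility with $\limsup$ is where the real difficulty lies. Everything else---the pushout, the Koszul description of $C_*(\widetilde{S_L})$, the multiplicity count for finite covers of an amalgam, and the final comparison on $L$---is insensitive to the field, so the same argument handles $\Q$ (recovering Davis--Leary) and all $\Fp$ simultaneously.
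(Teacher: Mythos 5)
Your strategy departs from the paper's in a structurally important way, and it leaves a gap that you have candidly flagged but not closed. The paper does not do a vertex-by-vertex induction via the amalgam $A_L = A_{L'} *_{A_{\Lk(v)}} A_{\St(v)}$. Instead it runs the Mayer--Vietoris \emph{spectral sequence} for the cover of the Salvetti complex by all of its maximal standard tori at once. The nerve of that cover is a simplex $\Delta$, and the $E^1$-page is $E^1_{i,j}=C_i(\Delta;H_j(T_\sigma;F[\Gamma/\Gamma_k]))$. A single elementary lemma (your $1-\bar g$ observation, in geometric clothing: covers of a positive-dimensional torus $T_\sigma$ have sublinearly growing homology because the component count grows sublinearly) kills everything except the row $j=0$ and, within that row, everything except the summands with $T_\sigma=\mathrm{pt}$. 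The surviving complex is literally $C_*(\Delta;V_\sigma)$ with $V_\sigma$ equal to $F[\Gamma/\Gamma_k]$ when $T_\sigma=\mathrm{pt}$ and $0$ otherwise, and this is \emph{exactly} computable: the simplices with $V_\sigma=0$ form a subcomplex $\mathcal L\subset\Delta$ homotopy equivalent to $L$ (it is the nerve of the cover of $L$ by its maximal simplices), and since $\Delta$ is a simplex the quotient chain complex has homology $\bar H_{i-1}(L;F)\otimes F[\Gamma/\Gamma_k]$. All approximations in that argument are ``chain map with sublinear kernel,'' which do control homology ranks; no rank of a connecting homomorphism ever needs to be estimated.

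This is precisely the difficulty that your two-piece induction does not get around. After discarding the normalized-acyclic $C_{\mathrm{St}}$, the Mayer--Vietoris sequence
\[
\cdots\to H_i(C_{\Lk})\xrightarrow{\ f_i\ } H_i(C_{L'})\to H_i(B\Gamma_k)\to H_{i-1}(C_{\Lk})\xrightarrow{\ f_{i-1}\ } H_{i-1}(C_{L'})\to\cdots
\]
gives $b_i(B\Gamma_k)=\dim\operatorname{coker}f_i+\dim\ker f_{i-1}$, and the inductive hypothesis only controls the normalized dimensions of the source and target of each $f_j$, not the normalized rank of $f_j$ itself. Since normalized $\Fp$-Betti numbers are not known to come from a rank function that is additive in exact sequences, you cannot recover $\operatorname{rank}f_j$ from those dimensions the way von Neumann dimension lets Davis--Leary do. Your stated expectation that ``normalized invertibility of each $1-\bar v$ prevents its block of $\partial$ from losing rank'' is the heart of the matter, and it is not established: rank of a map on \emph{homology} can drop even when all the relevant $1-\bar v$ blocks have almost full rank, since the interaction between blocks is exactly what the simplicial boundary of $L$ encodes, and that is what you are trying to prove converges. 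Strengthening the induction to also track the normalized ranks of all inclusion-induced maps $H_*(C_A)\to H_*(C_{A'})$ for $A\subset A'\subset L$ might close the gap, but that is a substantially larger claim than the theorem itself and would need its own argument. The paper's ``many-piece'' nerve avoids the problem entirely by never needing to estimate the rank of a map between homology groups of covers: everything reduces to a chain complex of induced modules that can be identified on the nose with $C_*(\Delta,\mathcal L;F)\otimes F[\Gamma/\Gamma_k]$.

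A secondary remark: your preliminary reductions (the pushout of Salvetti complexes, the K\"unneth factorization of $C_{\mathrm{St}}$ against the normalized-acyclic two-term complex for $\Z$, and the multiplicity bookkeeping for preimages of $S_{L'},S_{\St(v)},S_{\Lk(v)}$ in $B\Gamma_k$) are all sound and parallel to facts used implicitly in the paper; and your $1-\bar g$ lemma is the correct analogue of the paper's vanishing lemma. The gap is only in the final Mayer--Vietoris comparison, but it is a genuine one, not a routine verification.
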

Here $\bar b_{i-1}(L;F)$ denotes the reduced Betti number of $L$ with coefficients in $F$.
In particular, the $\limsup$ is actually a limit, it does not depend on the choice of chain but does depend on the characteristic of the coefficient field. 
\begin{corollary}\label{c:tors}
	Suppose that $L$ is a flag triangulation of $\R P^2$.
	\begin{align*}
		b_3^{(2)}(A_{L};\Q)&= 0,\\
		b_3^{(2)}(A_{L};\F_2)&=1.
	\end{align*}
\end{corollary}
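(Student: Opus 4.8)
The plan is to apply Theorem~\ref{kl2betti} directly, taking $F=\Q$ and $F=\F_2$ in turn. Since $\Gamma=A_L$ and the statement identifies $b_i^{(2)}(A_L;F)$ with $\bar b_{i-1}(L;F)$, the corollary reduces at once to the computation of the reduced second Betti number $\bar b_2(L;F)$ of a flag triangulation $L$ of $\R P^2$. As $2>0$, reduced and ordinary Betti numbers agree in this degree, so the task is just to compute $b_2(L;F)=\dim_F H_2(L;F)$.

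Next I would recall the homology of the real projective plane, using simplicial (equivalently singular, by topological invariance) homology so the answer depends only on the homeomorphism type of $|L|\cong\R P^2$ and not on the chosen flag triangulation. Over $\Q$: $\R P^2$ is a closed non-orientable surface, so $H_2(\R P^2;\Q)=0$, giving $\bar b_2(L;\Q)=0$ and hence $b_3^{(2)}(A_L;\Q)=0$. Over $\F_2$: by $\F_2$-Poincaré duality (or a direct cellular computation with the two-cell CW structure on $\R P^2$, where the attaching map has degree $2\equiv 0 \bmod 2$), $H_2(\R P^2;\F_2)\cong\F_2$, so $\bar b_2(L;\F_2)=1$ and $b_3^{(2)}(A_L;\F_2)=1$.

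There is essentially no obstacle here; the corollary is a sample computation illustrating Theorem~\ref{kl2betti}. The only points worth a remark are that flag triangulations of $\R P^2$ do exist (e.g.\ take a fine enough subdivision of any triangulation, so that no empty simplices are created), and that the discrepancy $b_3^{(2)}(A_L;\F_2)=1\neq 0=b_3^{(2)}(A_L;\Q)$ is exactly the mod $2$ versus rational phenomenon advertised in the introduction, forcing exponential torsion growth in $H_2$ of the residual tower for $A_L$.
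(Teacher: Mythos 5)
Your proof is correct and follows the intended approach: the corollary is meant as an immediate application of Theorem~\ref{kl2betti}, reducing the claim to the standard facts that $H_2(\R P^2;\Q)=0$ and $H_2(\R P^2;\F_2)\cong\F_2$, exactly as you compute.
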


The proof of Theorem \ref{kl2betti} also shows that $F$-$L^2$-Betti numbers of finite index subgroups of RAAG's are multiplicative (see Corollary \ref{c:finiteindex}), and we will use this later in Theorem \ref{nopsinger}. In general, it is not known whether the $\Fp$-$L^2$-Betti numbers are multiplicative.

By the universal coefficient theorem, $H_n(X,\F_p)$ is determined by $H_n(X,\Q)$ and $\Z/p$-summands in $H_n(X,\Z)$ and $H_{n-1}(X,\Z)$.
In this case, if $A_L$ is as in Corollary \ref{c:tors}, then since $A_L$ has a $3$-dimensional model for $BA_L$, $H_3(B\gG_k; \Z)$ is torsion-free.
Therefore, this discrepancy between $\Q$ and $\F_2$ homology leads to exponentially growing torsion in homology in degree $2$.
\begin{corollary}
	The group $A_L$ as in Corollary \ref{c:tors} has exponential $H_2$-torsion growth:
	\[
		\limsup_{k} \frac{ \log |H_2(B\gG_k;\Z)_{tors}|}{[A_L:\gG_k]} >0.
	\]
	Furthermore, the rank of the $2$-torsion subgroup of $H_2(B\gG_k;\Z)$ grows linearly in $[A_L:\gG_k]$.
\end{corollary}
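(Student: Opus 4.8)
\emph{Proof proposal.} The plan is to deduce both assertions from the single equation
$b_3^{(2)}(A_L;\F_2)-b_3^{(2)}(A_L;\Q)=1$ furnished by Theorem~\ref{kl2betti} and Corollary~\ref{c:tors}, using the universal coefficient theorem to convert the gap between mod~$2$ and rational homology in degree $3$ into $2$-torsion in degree $2$. First I would fix a concrete finite model: since $L$ is a $2$-complex, the Salvetti complex $S_L$ is a finite aspherical $3$-complex with $\pi_1 S_L=A_L$, and one may take $B\gG_k$ to be the finite cover of $S_L$ corresponding to $\gG_k$. Then $C_\ast(B\gG_k;\Z)$ is a complex of finitely generated free abelian groups concentrated in degrees $0,\dots,3$, so $H_3(B\gG_k;\Z)$, being a subgroup of $C_3(B\gG_k;\Z)$, is free abelian --- this is the torsion-freeness already noted in the text.

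Next I would apply the universal coefficient theorem in degree $3$:
\[
H_3(B\gG_k;\F_2)\;\cong\;\bigl(H_3(B\gG_k;\Z)\otimes\F_2\bigr)\ \oplus\ \operatorname{Tor}^{\Z}_1\!\bigl(H_2(B\gG_k;\Z),\F_2\bigr).
\]
Since $H_3(B\gG_k;\Z)$ is free, the first summand has $\F_2$-dimension $b_3(B\gG_k;\Q)$, while $\operatorname{Tor}^{\Z}_1(H_2(B\gG_k;\Z),\F_2)$ is an $\F_2$-vector space of dimension $r_k$, where $r_k$ denotes the rank of the $2$-torsion subgroup of $H_2(B\gG_k;\Z)$ (the number of $\Z/2^a$ summands in its primary decomposition). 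Hence $\dim_{\F_2}H_3(B\gG_k;\F_2)=b_3(B\gG_k;\Q)+r_k$. Dividing by $[A_L:\gG_k]$ and letting $k\to\infty$, Theorem~\ref{kl2betti} (which guarantees these normalized limits exist) together with Corollary~\ref{c:tors} gives
\[
\lim_k\frac{r_k}{[A_L:\gG_k]}\;=\;b_3^{(2)}(A_L;\F_2)-b_3^{(2)}(A_L;\Q)\;=\;1-0\;=\;1,
\]
which is precisely the claimed linear growth of the $2$-torsion rank.

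For the exponential torsion growth I would bound the order of the torsion subgroup below by its rank: writing the $2$-primary part of $H_2(B\gG_k;\Z)_{tors}$ as $\bigoplus_{j=1}^{r_k}\Z/2^{a_{k,j}}$ with all $a_{k,j}\geq 1$, we get $|H_2(B\gG_k;\Z)_{tors}|\geq 2^{a_{k,1}+\dots+a_{k,r_k}}\geq 2^{r_k}$, whence
\[
\limsup_k\frac{\log|H_2(B\gG_k;\Z)_{tors}|}{[A_L:\gG_k]}\;\geq\;(\log 2)\lim_k\frac{r_k}{[A_L:\gG_k]}\;=\;\log 2\;>\;0.
\]
I do not expect a real obstacle here: the argument is bookkeeping layered on Theorem~\ref{kl2betti}. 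The one point to keep in mind is that the universal coefficient theorem recovers the $2$-torsion \emph{rank} but not the exponents $a_{k,j}$, so the passage from rank to order only yields the lower bound $2^{r_k}$ for $|H_2(B\gG_k;\Z)_{tors}|$ --- which is exactly what is needed, since even the rank alone grows linearly.
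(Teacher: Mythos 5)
Your proof is correct and follows exactly the route the paper indicates in the discussion immediately preceding the corollary (free $H_3$ because of the $3$-dimensional Salvetti model, then UCT in degree $3$ to convert the gap $b_3^{(2)}(A_L;\F_2)-b_3^{(2)}(A_L;\Q)=1$ into linear growth of the $2$-torsion rank $r_k$ of $H_2$, and finally $|H_2(B\gG_k;\Z)_{tors}|\ge 2^{r_k}$). The paper gives only the sketch; your write-up fills in the same argument with the correct bookkeeping.
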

While it is conjectured that for arithmetic hyperbolic $3$-manifold groups the torsion in homology grows exponentially in residual chains of congruence covers, this is the first example of a finitely presented group of any sort where one can prove that homology torsion grows exponentially in a residual chain, answering a query of Bergeron for such a group.
By contrast, Abert, Gelander, and Nikolov showed that if $L$ is connected then $H_1$-torsion of $A_L$ grows slower than exponentially \cite{abn}.

For other groups $\Gamma$ the computation of $L^2$-Betti numbers and homology torsion growth is a difficult problem.
A basic vanishing principle which can make computations of $L^2$-Betti numbers simpler is the following conjecture often attributed to Singer.
\begin{singconj}
	Let $M^n$ be a closed aspherical manifold.
	Then
	\[
		b_{i}^{(2)}(\pi_1(M^n); \Q) = 0 \text { for } i \ne \frac{n}{2}.
	\]
\end{singconj}

So in the residually finite setting, the free part of homology should grow sublinearly outside the middle dimension.
A more recent vanishing principle regarding torsion growth, motivated by considerations in number theory, is the following conjecture made by Bergeron and Venkatesh in the context of arithmetic locally symmetric spaces \cite{bv} (see also \cite{berg}).
\begin{bvconj}
	Let $G$ be a semisimple Lie group, $\gG$ a cocompact arithmetic lattice in $G$, and $\gG_k$ a sequence of congruence subgroups with $\cap_{k} \gG_k = 1$.
	Then
	\[
		\limsup_{k}\frac{ \log |H_i(B\gG_k;\Z)_{tors}|}{[\Gamma:\gG_k]}=0
	\]
	unless $i = \frac{\dim(G/K)-1}{2}$. 
\end{bvconj}
\begin{remark}
The conjecture is actually more precise and predicts that the limit is positive in some cases, e.g. when $G$ is $SL(3,\mathbb R)$, $SL(4,\mathbb R)$ or $SO(m,n)$ for $mn$ odd.
\end{remark}

Partially motivated by this conjecture, in \cite{luck2} L{\"u}ck suggested such a vanishing principle could hold quite generally for arbitrary closed aspherical manifolds.
\begin{luckconj}[1.12(2), \cite{luck2}]\label{luckconj}
	Let $M^n$ be a closed aspherical $n$-manifold with residually finite fundamental group.
	Let $\gG_k \vartriangleleft \pi_1(M^n)$ be any normal chain with $\bigcap_{k} \gG_k = 1$.
	If $i\not=(n-1)/2$ then\label{BV}
	\[
		\limsup_{k} \frac{\log|H_{i}(B\gG_k;\Z)_{tors}|} {[\pi_1(M^n):\gG_k]} = 0.
	\]
\end{luckconj}
It is interesting to note that the Singer and L\"uck Conjectures together imply an $\Fp$-version of the Singer conjecture.
\begin{fpsingconj}	
	Let $M^n$ be a closed aspherical $n$-manifold with residually finite fundamental group.
	Then
	\[
		b_{i}^{(2)}(\pi_1(M^n);\Fp) = 0 \text{ for } i \ne \frac{n}{2}.
	\]
\end{fpsingconj}
To see this, suppose we have an $n$-manifold $M^{n}$ with $b_i^{(2)}(\pi_1(M^{n}); \Fp) \ne 0$ for $i \neq n/2$.
By Poincar\'e duality, we can assume $i>n/2$.
The K\"unneth formula implies that $M^n \times M^n \times M^n$ has nontrivial $\Fp$-$L^2$-Betti numbers in dimension $3i$.
Since the Singer Conjecture predicts that $b_{3i}^{(2)}(\pi_1((M^{n})^{3}); \Q) = 0$, the universal coefficient theorem implies exponential homological torsion growth in dimension $3i$ or $3i - 1$, which lies above the middle dimension, contradicting L\"uck's Conjecture.

The $\Fp$-Singer Conjecture is open even for $n = 3$ (but see \cite{ce}, \cite{bv}).
But in high enough dimensions, we show this conjecture is not true for any odd prime $p$.
\begin{theorem}\label{nopsinger}
	For any odd prime $p$, the $\Fp$-Singer Conjecture fails in all odd dimensions $\ge 7$ and all even dimensions $\ge 14$.
\end{theorem}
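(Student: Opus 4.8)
The plan is to reduce everything to the construction of a single closed aspherical $7$-manifold $M$ with residually finite fundamental group and with $b^{(2)}_i(\pi_1(M);\Fp)\neq 0$ for some $i$. Since $\dim M=7$ is odd, such an $i$ is automatically different from $\dim(M)/2$, so $M$ already disproves the $\Fp$-Singer Conjecture. All larger dimensions are then handled by taking products. For a closed hyperbolic surface $\Sigma_g$ one has $b^{(2)}_1(\pi_1\Sigma_g;\Fp)\geq b^{(2)}_1(\pi_1\Sigma_g;\Q)=2g-2\neq 0$, and applying the K\"unneth formula over $\Fp$ to products of finite covers gives
\[
 b^{(2)}_{i+m}\bigl(\pi_1(M\times\Sigma_{g_1}\times\cdots\times\Sigma_{g_m});\Fp\bigr)\ \geq\ b^{(2)}_i(\pi_1(M);\Fp)\cdot\prod_{j=1}^{m}b^{(2)}_1(\pi_1\Sigma_{g_j};\Fp)\ \neq\ 0,
\]
while the dimension is $7+2m$, which is odd, so $i+m\neq (7+2m)/2$. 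This produces all odd dimensions $\geq 7$. Running the same argument with $M\times M$ in place of $M$ produces all even dimensions $\geq 14$: there the nonvanishing degree is $2i+m$, which cannot equal the middle dimension $7+m$ since $2i$ is even. These are precisely the ranges in the statement, so the method is optimal.

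It remains to build $M$. We start from Theorem~\ref{kl2betti}. Fix a flag triangulation $L$ of a two-dimensional mod $p$ Moore space $M(\Z/p,1)$; such triangulations exist (start from any simplicial model and barycentrically subdivide). Then $L$ is $\Q$-acyclic while $\bar b_1(L;\Fp)\neq 0$, so Theorem~\ref{kl2betti} gives $b^{(2)}_i(A_L;\Q)=0$ for all $i$ but $b^{(2)}_2(A_L;\Fp)=\bar b_1(L;\Fp)\neq 0$. The Salvetti complex $S_L$ is an aspherical $3$-complex with $\pi_1 S_L=A_L$. Embed $S_L$ piecewise-linearly in $\R^7$ and let $N$ be a regular neighborhood, so that $N$ is a compact aspherical $7$-manifold with boundary and $\pi_1 N=A_L$ (the dimension $7$ enters here: a general $3$-complex embeds in $\R^7$ but need not embed in $\R^6$). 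Now carry out a Davis-style reflection construction on $N$: flag-triangulate $\partial N$, form the associated right-angled Coxeter reflection data, and glue up copies of $N$ to obtain a closed aspherical $7$-manifold $M$ containing $N$, hence a copy of $BA_L$, as a codimension-zero chamber. Because the reflection data is Coxeter-theoretic, $M$ can be arranged to be a (virtually) special nonpositively curved cube complex, so that $\pi_1(M)$ is residually finite.

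The main work --- and the step I expect to be the main obstacle --- is to prove that $b^{(2)}_i(\pi_1(M);\Fp)\neq 0$ for some $i$. What is required is a combinatorial formula for the growth of $\dim_{\Fp}H_i(-;\Fp)$ along a residual tower of finite covers of $M$: a residually finite, mod $p$ refinement of the Davis--Leary computation behind Theorem~\ref{kl2betti}, now applied to the Coxeter-type group assembled from copies of $N$ rather than to a right-angled Artin group. The crucial point is that this mod $p$ formula, unlike its rational counterpart, retains a contribution coming from the nonzero class in $\bar H_1(L;\Fp)$, i.e.\ from the mod $p$ Moore space built into $N$; hence $b^{(2)}_i(\pi_1(M);\Fp)\neq 0$ in a degree $i$ strictly away from the middle, which is exactly what violates the $\Fp$-Singer Conjecture and, via the product-and-universal-coefficients argument of the introduction, forces the Singer and L\"uck Conjectures apart. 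Establishing this mod $p$ homology-growth formula, keeping track of which torsion classes of $L$ actually survive into the homology of the finite covers of $M$, and verifying that the reflection construction preserves residual finiteness are the technical core; granting these, the product construction of the first paragraph completes the proof in all the asserted dimensions.
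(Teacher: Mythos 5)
Your reduction to a single seven-dimensional example and the subsequent product argument are essentially the same as the paper's and are fine. The problem is that the core of your proposal --- showing $b^{(2)}_i(\pi_1 M;\Fp)\neq 0$ for your reflection-group manifold $M$ --- is exactly the part you defer, and the strategy you sketch for it has two structural defects that the paper's actual argument is designed to avoid.

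First, the paper never proves nonvanishing of $b^{(2)}_*(\cdot;\Fp)$ for a particular closed manifold constructed from $A_L$. Instead, it embeds the \emph{link} $OL$ (not the Salvetti complex) as a full subcomplex of a flag triangulation $T$ of $S^6$, so that the commensurable right-angled Coxeter group $W_{OL}$ sits inside $W_T$, and then runs an indirect Mayer--Vietoris descent (Lemmas~\ref{ineq} and \ref{l:evenodd}): assuming $\Fp$-Singer holds for $T$ forces a full subcomplex of a vertex link with nonzero $b^{(2)}_4$, then a subcomplex of an edge link with nonzero $b^{(2)}_3$, and so on, terminating in an absurdity inside $S^0$. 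The conclusion is only that $\Fp$-Singer fails for \emph{some} Coxeter manifold of dimension $3$, $5$, or $7$ --- not for the top one. Your proposal needs a direct positive statement for a specific manifold $M$, which requires a mod $p$ analogue of a Davis--Okun type $L^2$-Betti number formula for reflection group trick manifolds; no such formula is in the paper or in the literature, and you offer no route to it.

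Second, there is a dimension obstruction to running the paper's descent in your setup even if you wanted to. The descent engine (as in \cite{os} and here) requires a group acting properly on a contractible $n$-manifold with $b^{(2)}_i\neq 0$ for some $i>n/2$. For a $d$-dimensional defining complex $L$ one has $b^{(2)}_{d+1}(A_L;\Fp)=\bar b_d(L;\Fp)$ as the top potentially nonzero degree. The paper embeds the $d$-dimensional $OL$ into $S^{2d}$, giving a proper $W_{OL}$-action on a contractible $(2d+1)$-manifold with $d+1>(2d+1)/2$, which is on the nose. By contrast, embedding the $(d+1)$-dimensional Salvetti complex needs ambient dimension at least $2(d+1)+1=2d+3$, and then $d+1<(2d+3)/2$, so the hypothesis $i>n/2$ can \emph{never} be satisfied for a regular neighborhood of $S_L$. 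Your choice of $L$ as a $2$-dimensional Moore space also forfeits the role played by the prime $2$: the paper deliberately takes $L=S^2\cup_p D^3$ so that $H_3(L;\F_2)=0$, which is precisely what lets Theorem~\ref{vk} (whose obstruction is $2$-torsion) embed $OL$ into $S^6$ despite nonvanishing $\Fp$-homology in degree $3$. The van Kampen input is genuinely load-bearing here, not just a technical convenience, and it is absent from your construction.

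In short: the product step is right, but the existence of a low-dimensional counterexample is exactly what you do not prove, and the reflection-on-a-regular-neighborhood route you propose is not the paper's route and, as far as the arguments available go, cannot be made to feed into the descent machinery the paper relies on.
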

Our examples are manifolds constructed via right-angled Coxeter groups; in particular they are locally CAT(0), so it follows that the rational homology and torsion homology growth conjectures are incompatible in the CAT(0) setting.
On the other hand, our examples are not locally symmetric so even though the Singer conjecture is known for locally symmetric spaces the Bergeron--Venkatesh conjecture remains open.

Here is a brief outline of our construction.
In \cite{os}, it was shown that if a finite type group $\Gamma$ acts properly on a contractible $n$-manifold and $b_{i}^{(2)}(\Gamma;\Q) \ne 0$ for $i > \frac{n}{2}$, then there is a counterexample to the Singer Conjecture (in some dimension possibly different from $n$.)
We employ a similar strategy here.
Our group is a finite index subgroup of a right-angled Artin group with $b_4^{(2)}(A_L; \Fp) \ne 0$, and the $7$-manifold is going to be the Davis complex corresponding to a right-angled Coxeter group associated to a flag triangulation of a $S^{6}$.

This uses Theorem \ref{kl2betti} and the main result of \cite{ados}.
More precisely, suppose $L=S^2\cup_pD^3$ is a flag triangulation of a complex obtained by gluing a $3$-disk to a $2$-sphere along a degree $p$ map.
Theorem \ref{kl2betti} shows that $b_4^{(2)}(A_L; \Fp) =1$.
Since $H_{3}(L; \mathbb{F}_{2} )=0$, \cite{ados}*{Theorem 5.1} shows that a related flag complex $OL$ (the link of a vertex in the Salvetti complex of $A_{L}$) embeds into a flag triangulation $T$ of $S^6$.
This is where we need $p \ne 2$; interestingly this goes back to the fact that van Kampen's obstruction to embedding $d$-dimensional simplicial complexes into $\R^{2d}$ is an order two invariant.

Now, $A_L$ is commensurable to the right-angled Coxeter group $W_{OL}$ by \cite{dj00}, and $W_{OL}$ is a subgroup of $W_{T}$.
This acts properly on the associated Davis complex, a contractible $7$-manifold, so we obtain the desired proper action for a finite index subgroup of $A_L$.

We then show that the $\Fp$-Singer Conjecture fails for either the right-angled Coxeter group associated to $T$ or to a link of an odd-dimensional simplex in $T$.
In other words, there must be a right-angled Coxeter group counterexample in one of the dimensions 3, 5, or 7.

Taking cartesian products of counterexamples and surface groups, we get counterexamples in all the dimensions stated in the theorem.
In this way, we also get a single closed aspherical manifold contradicting $\Fp$-Singer for a finite collection of primes.
This suggests the following.
\begin{question}
	Given a closed aspherical manifold $M^n$, is there a number $N$ so that for all primes $p > N$, the $\Fp$-Singer Conjecture holds for $M^n$?
\end{question}
Of course, this is at least as difficult as the ordinary Singer conjecture, but it seems interesting (and open) in many cases where the ordinary Singer conjecture is known.
Along the same lines, one can modify the L\"uck conjecture by ignoring the contributions to torsion coming from a finite collection of exceptional primes which should be determined by the geometry of the manifold $M^n$ (akin to how the exceptional primes for right-angled Artin groups are determined by the complex $L$.)
This seems particularly interesting for locally symmetric spaces and might be easier than the Bergeron--Venkatesh conjecture.

\begin{acknowledgements}
	We would like to acknowledge hidden contributions of Mike Davis.
	Many of the ideas in this paper originated in our earlier interactions with Mike.
	We also thank Wolfgang L\"uck for pointing us to his survey article and Shmuel Weinberger for insightful comments on an early draft of the paper.
	The first author would like to thank the Max Planck Institute for Mathematics for its support and excellent working conditions.
	This material is based upon work done while the third author was supported by the National Science Foundation under Award No. 1704364.
\end{acknowledgements}

\section{Right-angled Artin and Coxeter groups}\label{raagracg}

We collect some facts about right-angled Artin groups (RAAG's), right-angled Coxeter groups (RACG's) and relations between one and the other which we will need later.
The philosophy to keep in mind is that RAAGs are the things we can compute, RACGs are the things related to closed aspherical manifolds, and translating from the former to the later involves a bit of (classical) embedding theory.

Let $L$ be a flag complex with vertex set $V$.
The one-skeleton of $L$ determines two group presentations.
A presentation for the RAAG $A_L$ has generators $\{g_v\}_{v\in V}$; there are relations $[g_v, g_{v'}] = 1$ (i.e., $g_v$ and $g_{v'}$ commute) whenever $\{v,v'\}\in L^{(1)}$.
The RACG $W_L$ is the quotient of $A_L$ formed by adding the relations $(g_v)^2=1$, for all $v\in V$.

We now describe a standard classifying space for a RAAG $A_L$.
More precisely, let $T^V$ denote the product $(S^1)^V$.
Each copy of $S^1$ is given a cell structure with one vertex $e_0$ and one edge.
For each simplex $\gs\in L$, $T(\gs)$ denotes the subset of $T^V$ consisting of points $(x_v)_{v\in V}$ such that $x_v=e_0$ whenever $v$ is not a vertex of $\gs$.
So, $T(\gs)$ is a $(\dim \gs +1)$-dimensional standard subtorus of $T^V$.
The \emph{Salvetti complex} for $A_L$ is the subcomplex $BA_L$ of $T^V$ defined as the union of the subtori $T(\gs)$ over all simplices $\gs$ in $L$:
\[
	BA_L:= \bigcup_{\gs\subset L} T(\gs).
\]

The link of the unique vertex is a flag complex of the same dimension as $L$, and is usually denoted $OL$ (and called the \emph{octahedralization} of $L$.)

We now give a similar construction of a classifying space for the commutator subgroup $C_L$ of $W_L$ (which is torsion-free and finite index in $W_L$.)
Let $I^V$ denote the product $([-1,1])^V$.
Each copy of $[-1,1]$ is given a cell structure with two vertices and one edge.
For each simplex $\gs\subset L$, $I(\gs)$ denotes the subset of $I^V$ consisting of those points $(x_v)_{v\in V}$ such that $x_v \in \{\pm 1\}$ whenever $v$ is not a vertex of $\gs$.
So, $I(\gs)$ is a disjoint union of parallel faces of $I^V$ of dimension $\dim \gs +1$.
The \emph{standard classifying space} for $C_L$ is the subcomplex $BC_L$ of $I^V$ defined as the union of the $I(\gs)$ over all simplices $\gs$ in $L$:
\[
	BC_L:= \bigcup_{\gs\in L} I(\gs).
\]

The link of each vertex of $BC_L$ is a copy of $L$.
The universal cover of $BC_L$ is denoted $\Sigma_L$ and called the \emph{Davis complex} of $W_L$.
$(\Z/2)^V$ acts on $I^V$ and preserves the subcomplex $BC_L$.
The lifts of this induced action to $\Sigma_L$ are precisely $W_L$ and we have the exact sequence $$1 \to C_L \to W_L \to (\Z/2)^V \to 1.$$
\begin{lemma}\label{rafacts}
	Let $L$ be a flag complex.
	\begin{enumerate}
		\item $A_L$ is commensurable to $W_{OL}$ \cite{dj00}.
		\item $W_L$ is linear, and hence residually finite.
		Therefore, so is $A_L$.
		\item If $L$ is a triangulation of $S^{n-1}$, then $\Sigma_L$ is a contractible $n$-manifold.
	\end{enumerate}
\end{lemma}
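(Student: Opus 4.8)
The three assertions are classical and logically independent, so I would treat them one at a time; the only parts needing real care are making the embedding in (1) explicit and quoting the right manifold criterion in (3). For (1), I would write down the Davis--Januszkiewicz embedding by hand. Recall that $OL$ has two vertices $v_{+},v_{-}$ over each $v\in V$, with $v_{\varepsilon}$ joined to $w_{\delta}$ in $OL$ exactly when $\{v,w\}$ spans an edge of $L$. Define $\varphi\colon A_L\to W_{OL}$ by $\varphi(g_v)=s_{v_{+}}s_{v_{-}}$. This is a well-defined homomorphism: when $\{v,v'\}\in L^{(1)}$ the four generators $s_{v_{\pm}},s_{v'_{\pm}}$ pairwise commute in $W_{OL}$, so $\varphi$ respects the relation $[g_v,g_{v'}]=1$. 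One then checks, as in \cite{dj00}, that $\varphi$ is injective and that its image has finite index: $\varphi(A_L)$ lies in the kernel of the retraction $W_{OL}\twoheadrightarrow(\Z/2)^{V}$ sending both $s_{v_{+}}$ and $s_{v_{-}}$ to the generator dual to $v$, and the index of $\varphi(A_L)$ in that kernel is finite as well. Thus $A_L$ is commensurable with $W_{OL}$ and, more usefully, is isomorphic to a subgroup of it.

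For (2), I would invoke the canonical (Tits) reflection representation of $W_L$ on $\R^{V}$, in which $g_v$ acts by $x\mapsto x-2\langle x,e_v\rangle e_v$ for the symmetric bilinear form with $\langle e_v,e_v\rangle=1$, with $\langle e_v,e_w\rangle=0$ when $\{v,w\}$ is an edge of $L$, and $\langle e_v,e_w\rangle=-1$ otherwise. By Tits' theorem this representation is faithful, so $W_L$ is a finitely generated linear group in characteristic $0$, hence residually finite by Malcev's theorem. Residual finiteness passes to subgroups, and $A_L$ embeds in $W_{OL}$ by (1), so $A_L$ is residually finite too.

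For (3), $Y_L\subset I^{V}$ is a cube complex and $\Sigma_L$ is its universal cover, so $\Sigma_L$ is a simply connected cube complex all of whose vertex links are copies of $L$ (the link of a vertex of $Y_L$ is $L$, as noted above, and a covering map preserves links). Since $L$ is flag, Gromov's link condition shows that the piecewise-Euclidean cubical metric on $\Sigma_L$ is CAT(0), hence $\Sigma_L$ is contractible. When $L$ triangulates $S^{n-1}$ it is in particular a generalized homology $(n-1)$-sphere --- the link of every $k$-simplex of a triangulated sphere has the homology of $S^{n-2-k}$, by a local homology computation --- and Davis's theorem characterizing when $\Sigma_L$ is a topological manifold then gives that $\Sigma_L$ is an $n$-manifold.

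The step I expect to be the real obstacle is the manifold conclusion in (3). At a vertex it is immediate, since there the link is $L\cong S^{n-1}$ and the cone on a sphere is a ball; but local Euclideanness must also be checked along the positive-dimensional cube faces, where the relevant links are links of simplices of $L$. For a PL triangulation these are PL spheres and there is nothing to do, but for a merely topologically triangulated $S^{n-1}$ one meets double-suspension phenomena, and it is precisely here that one invokes Edwards' theorem --- repackaged in Davis's homology-sphere criterion --- rather than arguing by hand. Everything else in the lemma is bookkeeping.
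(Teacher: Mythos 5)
The paper states this lemma without proof, merely pointing to \cite{dj00} for (1); parts (2) and (3) are standard. Your arguments for (2) (Tits' faithful reflection representation plus Malcev) and for (3) (Gromov's link condition for CAT(0), then the generalized-homology-sphere manifold criterion of Davis/Edwards) are exactly the usual ones and are fine.

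There is, however, a genuine error in your treatment of (1). The map $\varphi\colon A_L\to W_{OL}$, $g_v\mapsto s_{v_+}s_{v_-}$, is a well-defined injective homomorphism, but its image is essentially never of finite index in $W_{OL}$, contrary to what you assert. A rational Euler characteristic count shows this. One has $\chi(A_L)=\chi(BA_L)=\sum_{S\in L\cup\{\emptyset\}}(-1)^{|S|}=1-\chi(L)$, while for the Coxeter group $\chi(W_{OL})=\sum_{\sigma\in OL\cup\{\emptyset\}}(-1/2)^{|\sigma|}$; since each $(k+1)$-vertex simplex of $L$ gives rise to $2^{k+1}$ simplices of $OL$, this also equals $1+\sum_{\sigma\in L}(-1)^{|\sigma|}=1-\chi(L)$. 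Thus $\chi(A_L)=\chi(W_{OL})$ always, and since rational Euler characteristic is multiplicative over finite index, if $\varphi(A_L)$ had finite index $n$ in $W_{OL}$ we would be forced to have $n=1$ whenever $\chi(L)\ne 1$ --- impossible, since $A_L$ is torsion-free and $W_{OL}$ is not. Concretely, for $L$ two disjoint vertices, $\varphi$ embeds $F_2$ into $(\Z/2)^{*4}$; both have rational Euler characteristic $-1$, so the image has infinite index (and likewise infinite index in the kernel of $W_{OL}\twoheadrightarrow(\Z/2)^V$, which is a free group of rank $5$). The actual content of the Davis--Januszkiewicz theorem is that $A_L$ and $W_{OL}$ possess isomorphic finite-index subgroups, not that one sits in the other with finite index; their proof identifies the universal cover of the Salvetti complex with the Davis complex $\Sigma_{OL}$ and produces the common finite-index subgroup from that identification. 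Your deduction of residual finiteness of $A_L$ in (2) survives, because it only uses injectivity of $\varphi$, but the commensurability claim in (1) needs the correct argument.
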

With Davis in \cite{ados}, we studied the minimal dimension of aspherical manifolds with right-angled Artin fundamental groups.
Constructing such manifolds involves embedding right-angled Artin groups $A_L$ into manifold Coxeter groups $W_{S^{n-1}}$.
This boils down to finding PL-embeddings of $OL$ into spheres.
The complexes $OL$ have ``join-like'' properties which make them difficult to embed directly but one can compute when the van Kampen embedding obstruction vanishes for these complexes.
It is a complete obstruction to PL-embedding $d$-complexes in $S^{2d}$, except when $d=2$, and gives the following embedding criterion.
\begin{theorem}[2.2, 5.1 and 5.4,\cite{ados}]\label{vk}
	Suppose $L$ is a $d$-dimensional flag complex, $d \ne 2$.
	Then $OL$ embeds as a full subcomplex into a flag $PL$-triangulation of $S^{2d}$ if and only if $H_d(L; \F_2) = 0$.
\end{theorem}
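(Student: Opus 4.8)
The plan is to split the statement into a homotopy-theoretic embedding criterion and a combinatorial computation of van Kampen's obstruction for octahedralizations. Write $d=\dim L=\dim OL$, and recall that $OL$ is the flag complex on the vertex set $V\times\{+,-\}$ in which a set of vertices spans a simplex iff it contains at most one of $v^{+},v^{-}$ for each $v$ and projects to a simplex of $L$; thus the $d$-simplices of $OL$ are the \emph{signed} $d$-simplices $\gs^{\varepsilon}$ of $L$, and $O(\gs*\tau)=O(\gs)*O(\tau)$ with $O(\gD^{k})=(S^{0})^{*(k+1)}$ the boundary of a cross-polytope. In particular $OL$ tautologically PL-embeds in $S^{|V|-1}$ (as a subcomplex of the flag triangulation $O(\gD^{V})=\partial(\text{cross-polytope})$ of that sphere), so the only real issue is how small an ambient sphere $OL$ fits in, and this is governed by van Kampen's obstruction $\mathfrak{o}(OL)$, a cohomology class of the deleted product $OL\times OL\setminus\gD$.

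\textbf{Reduction.} Vanishing of $\mathfrak{o}(K)$ is necessary for PL-embeddability of a $d$-complex $K$ in $\R^{2d}=S^{2d}\setminus\{pt\}$, and by the Haefliger--Weber theorem it is also sufficient once $4d\ge 3(d+1)$, i.e.\ for $d\ge3$; for $d\le1$ sufficiency is classical (Kuratowski's criterion when $d=1$), while $d=2$ is excluded precisely because sufficiency fails there (Freedman--Krushkal--Teichner). Thus the theorem reduces to the computation
\[
	\mathfrak{o}(OL)=0\iff H_{d}(L;\F_{2})=0,
\]
which on its own already yields the ``only if'' direction, since a full subcomplex of any triangulation of $S^{2d}$ certainly PL-embeds in $S^{2d}$.

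\textbf{Computing $\mathfrak{o}(OL)$.} I would evaluate the van Kampen obstruction cocycle of a standard general-position map on pairs of disjoint $d$-simplices of $OL$. Two of them, $\gs^{\varepsilon}$ and $\tau^{\delta}$, are disjoint exactly when they carry opposite signs on every vertex of $\gs\cap\tau$, and over a fixed unordered pair $\{\gs,\tau\}$ of $d$-simplices of $L$ the intersection contributions cancel in pairs under flipping a single sign on the overlap (reflecting the order-two nature of this obstruction emphasized in the introduction). Hence the integral class $\mathfrak{o}(OL)$ is $2$-torsion and is detected by its mod $2$ reduction; assembling the surviving mod $2$ contributions then shows that $\mathfrak{o}(OL)$ vanishes if and only if every mod $2$ $d$-cycle of $L$ bounds, i.e.\ iff $H_{d}(L;\F_{2})=0$. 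An equivalent route is to compute the $\Z/2$-equivariant cohomology of the deleted join of $OL$ directly, decomposing it over the simplices of $L$ via $O(\gs*\tau)=O(\gs)*O(\tau)$ and the fact that the deleted join of $S^{k}$ is $\Z/2$-homotopy equivalent to $S^{k}$ with the antipodal action, and reading the primary obstruction off the result. This is the step I expect to be the crux: keeping careful track of the deleted product of $OL$, making the sign cancellation precise, and matching the surviving class with $\F_{2}$-homology of $L$ rather than with integral or rational homology --- it is exactly this role of the prime $2$ that later forces the hypothesis $p\ne2$ in the application to $L=S^{2}\cup_{p}D^{3}$.

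\textbf{Upgrading the embedding.} For the ``if'' direction, when $H_{d}(L;\F_{2})=0$ and $d\ne2$ the previous steps provide a PL-embedding $OL\hookrightarrow S^{2d}$, and it remains to realize $OL$ as a full subcomplex of a \emph{flag} PL-triangulation. I would extend the embedding to a PL-triangulation of $S^{2d}$ in which $OL$ is a subcomplex, pass to a regular neighborhood of $OL$ and triangulate its complement and collar by subdividing only away from $OL$, and then apply a derived (barycentric-type) subdivision relative to the already-flag complex $OL$; sufficiently fine subdivision kills all non-flag simplices and forces $OL$ to be a full subcomplex, while leaving $OL$ itself, and the embedding, unchanged. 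This last step is standard PL combinatorics; the real content of the theorem is the van Kampen computation in the previous paragraph.
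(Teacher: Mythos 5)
This theorem is quoted from [ados] (Theorems~2.2, 5.1, and 5.4) and is not proved in the present paper, so there is no in-paper argument to compare against; I'll assess the proposal against the cited source. Your three-step skeleton --- (i) completeness of the van Kampen obstruction for PL-embedding of $d$-complexes in $S^{2d}$ when $d\ne 2$, (ii) the computation $\mathfrak{o}(OL)=0\iff H_d(L;\F_2)=0$, (iii) upgrading a raw PL-embedding to a full subcomplex of a flag triangulation --- is indeed the structure of the cited result, and these correspond roughly to Theorems~2.2, 5.1, and 5.4 of [ados] respectively. The reduction in step (i) and the ``only if'' direction are clean.

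The middle step, which you yourself flag as the crux, has real gaps as written. The sign-cancellation argument does not parse: if $\gs^\varepsilon$ and $\tau^\delta$ are disjoint cells of $OL$ (so $\varepsilon$ and $\delta$ disagree everywhere on $\gs\cap\tau$), then flipping a \emph{single} sign at a vertex $v\in\gs\cap\tau$ makes $\varepsilon'(v)=\delta(v)$, so the new pair shares the vertex $v^{\delta(v)}$ and leaves the deleted product entirely. You would need to flip the signs of $v$ in both $\gs$ and $\tau$ at once, and then whether the two resulting intersection numbers cancel is a property of a carefully chosen general-position map --- it is not a formal consequence of ``the obstruction is order two.'' Similarly, ``the integral class is $2$-torsion and is detected by its mod $2$ reduction'' is not automatic: a $2$-torsion element can equal $2y$ with $4y=0$ and hence die under reduction mod $2$ without being zero, so one needs structural information about $H^{2d}_{\Z/2}$ of the deleted product of $OL$ before concluding the integral and mod-$2$ obstructions vanish together. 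This bookkeeping is exactly what [ados, Thm.~5.1] supplies via an explicit cocycle computation. Finally, the flag-upgrade step glosses over the point that the subdivision must be carried out \emph{relative} to the already-flag $OL$ so as not to destroy the embedding, and a subdivision that leaves $OL$ fixed does not automatically yield a flag complex near $OL$; [ados, Thm.~5.4] gives a specific construction rather than appealing to ``sufficiently fine subdivision.'' So: right shape and correct reduction, but the obstruction computation and the flag upgrade as sketched would not survive contact with the details.
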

The prime $2$ plays a special role in this theorem because van Kampen's obstruction looks at what happens to pairs of distinct points under a generic map $OL\to S^{2d}$, which leads to an order two (co)-homological invariant.
Therefore, if $H_d(L;\F_2)=0$ we get an embedding $OL\hookrightarrow S^{2d}$ irrespective of the $\Fp$-homology of $L$ for odd primes $p$.
This observation is key to the proof of Theorem \ref{nopsinger}.

\section{Proof of Theorem \ref{kl2betti}}

 Let $L$ be a flag complex, $\Gamma=A_L$ the right-angled Artin group defined by this complex, $B\Gamma$ its Salvetti complex, and $F$ any field.
By Lemma \ref{rafacts}, we can choose a chain $\gG_k\vartriangleleft \Gamma$ of normal, finite index subgroups with $\bigcap_{k}\gG_k=1$.

Consider the cover of the Salvetti complex $B\Gamma$ by the standard maximal tori $T_\ga$.
Its nerve is a simplex $\Delta$ since all the tori intersect at the base-point.
For a simplex $\sigma$ in $\Delta$, we denote the intersection of the corresponding tori by $T_{\sigma}=\bigcap_{\ga \in \sigma} T_{\ga}$.
We look at the finite cover $B\gG_k$ or equivalently, look at the coefficient module $V=F[\Gamma/\gG_k]$.
The Mayer--Vietoris spectral sequence (see VII.4 \cite{brown}) corresponding to the cover $\{T_\ga\}$ has $E^1$ term
\[
	E^1_{i,j}:=C_i(\Delta;H_j(T_{\sigma};V))\implies H_{i+j}(B\Gamma;V)=H_{i+j}(B\gG_k;F).
\]
The following lemma is crucial.
\begin{lemma}\label{vanishlemma}
	\[
		\lim_{k \to \infty} \frac{\dim_{F} H_j(T_\sigma; F[\Gamma/\gG_k])} {[\Gamma:\gG_k]}=
		\begin{cases}
			1 & \text{if } T_{\sigma}=pt, \text{ and } j=0,\\
			0 & \text{ otherwise.}
		\end{cases}
	\]
\end{lemma}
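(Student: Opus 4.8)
\emph{Proof plan.} The plan is to reduce the lemma to a statement about finite covers of tori. First I would identify the spaces $T_\sigma$ geometrically. The vertices of the nerve $\Delta$ are the maximal simplices $\tau$ of $L$ (the maximal tori $T_\tau$), so if $\sigma$ is the face of $\Delta$ spanned by $\tau_0,\dots,\tau_m$, then $T_\sigma=\bigcap_i T(\tau_i)=T(\rho)$, where $\rho=\tau_0\cap\dots\cap\tau_m$ is a (possibly empty) simplex of $L$. The subcomplex $T(\rho)\subset B\Gamma$ is exactly the Salvetti complex of the special subgroup $A_\rho\le A_L$ generated by the vertices of $\rho$, and since $\rho$ is a simplex this subgroup is free abelian of rank $\dim\rho+1$ — with $A_\rho=1$ and $T(\rho)=pt$ precisely when $\rho=\emptyset$, i.e.\ exactly when $T_\sigma=pt$.

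Next I would compute $H_j(T_\sigma;F[\Gamma/\gG_k])$ by restriction. This group is the $F$-homology of the cover of $T(\rho)$ pulled back from $B\gG_k\to B\Gamma$, i.e.\ the cover classified by the $A_\rho$-set $\Gamma/\gG_k$. Decomposing that set into $A_\rho$-orbits and using that $\gG_k$ is normal (so every point stabilizer equals $A_\rho\cap\gG_k$) — equivalently, applying Shapiro's lemma to the double cosets $A_\rho\backslash\Gamma/\gG_k$ — the pulled-back cover is a disjoint union of $N_k:=[\Gamma:\gG_k]/[A_\rho:A_\rho\cap\gG_k]$ copies of $B(A_\rho\cap\gG_k)$, so
\[
  H_j\bigl(T_\sigma;F[\Gamma/\gG_k]\bigr)\;\cong\;H_j\bigl(B(A_\rho\cap\gG_k);F\bigr)^{\oplus N_k}.
\]
The crucial observation is that $A_\rho\cap\gG_k$, a finite index subgroup of the free abelian group $A_\rho$, is again free abelian of rank $\dim\rho+1$, so $B(A_\rho\cap\gG_k)$ is homotopy equivalent to a $(\dim\rho+1)$-torus and $\dim_F H_j(B(A_\rho\cap\gG_k);F)=\binom{\dim\rho+1}{j}$, independently of $k$. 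Hence
\[
  \frac{\dim_F H_j(T_\sigma;F[\Gamma/\gG_k])}{[\Gamma:\gG_k]}\;=\;\frac{\binom{\dim\rho+1}{j}}{[A_\rho:A_\rho\cap\gG_k]}.
\]

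Finally I would take $k\to\infty$. If $\rho=\emptyset$ the right-hand side is $1$ for $j=0$ and $0$ otherwise, which is the first case of the lemma. If $\rho\ne\emptyset$, it remains to check $[A_\rho:A_\rho\cap\gG_k]\to\infty$: if these indices stayed $\le M$ along a subsequence, then any nontrivial $x\in A_\rho$ would satisfy $x^{M!}\in A_\rho\cap\gG_k\subseteq\gG_k$ along that subsequence, hence $x^{M!}\in\bigcap_k\gG_k=1$ (the chain is decreasing), contradicting that $A_\rho$ is torsion-free; so the ratio tends to $0$. I do not anticipate a serious obstacle here — the only point requiring genuine care is the bookkeeping in the restriction step, namely checking that normality of $\gG_k$ makes every $A_\rho$-orbit in $\Gamma/\gG_k$ a copy of $A_\rho/(A_\rho\cap\gG_k)$, after which the lemma reduces to the trivial fact that tori have $k$-independent Betti numbers.
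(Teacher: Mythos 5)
Your proposal is correct and follows essentially the same route as the paper: decompose the preimage of $T_\sigma$ in $B\gG_k$ into components (which, by normality of $\gG_k$, are identical finite covers of the torus $T_\sigma$, so their individual Betti numbers are bounded independently of $k$), count the components as $[\Gamma:\gG_k]/[\pi_1 T_\sigma : \pi_1 T_\sigma \cap \gG_k]$, and then use residuality plus torsion-freeness of $\pi_1 T_\sigma \cong A_\rho$ to show the second index diverges whenever $T_\sigma$ is not a point. The paper's proof is just a terser version of the same argument; your write-up supplies the details (identifying $T_\sigma=T(\rho)$, the Shapiro/orbit decomposition, and the explicit $\binom{\dim\rho+1}{j}$ bound) that the paper leaves implicit.
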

\begin{proof}
	Since covers of tori are tori, the only way the homology of covers of $T_{\sigma}$ can grow linearly is if the number of components of the preimage of $T_{\sigma}$ in $B\gG_k$ grows linearly in the index.
	Since $\gG_k$ is a residual sequence of normal covers, the number of components grows linearly if and only if $T_{\sigma}$ is a point.
	In more detail, since the cover is normal, the number of components is the ratio of indices $\frac{[\Gamma:\gG_k]}{|\pi_1T_{\sigma}:\pi_1T_{\sigma}\cap\gG_k|}$, and since the sequence $\gG_k$ is residual, the denominator grows with $k$ as long as $\pi_1T_{\sigma}$ is infinite.
\end{proof}
Therefore, up to an error whose dimension is sublinear in the index $[\Gamma:\gG_k]$, the spectral sequence is concentrated on the $E^1_{i,0}$ line.
This implies
\begin{equation}\label{collapse}
	\limsup_{k} \frac{\dim_{F} E^2_{i,0}} {[\Gamma:\gG_k]}=\limsup_{k} \frac{b_i(B\gG_k;F)}{[\Gamma:\gG_k]}.
\end{equation}

Next, we approximate the chain complex $E^1_{i,0}$ by something that we will be able to compute exactly.
For this, set
\[
	V_{\sigma}:=
	\begin{cases}
		F[\Gamma/\gG_k] & \text{if } T_{\sigma}=pt,\\
		0 & \text{otherwise.}
	\end{cases}
\]
Then, the projection $E^1_{i,0}\to C_i(\Delta;V_{\sigma})$ is a chain map and its kernel has dimension that is sublinear in the index $[\Gamma:\gG_k]$.
Therefore
\begin{equation}\label{reduce}
	\limsup_{k} \frac{ \dim_{F} E^2_{i,0}}{[\Gamma:\gG_k]}= \limsup_{k} \frac{ \dim_{F} H_i(\Delta;V_{\sigma})} {[\Gamma:\gG_k]}.
\end{equation}
The quantity in the limit on the right can be computed exactly, just in terms of the topology of $L$.
\begin{lemma}
	\[
		\frac{\dim_{F} H_i(\Delta;V_{\sigma})}{[\Gamma:\gG_k]}=\bar b_{i-1}(L;F).
	\]
\end{lemma}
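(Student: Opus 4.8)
The plan is to unwind the definitions so that the chain complex $C_i(\Delta; V_\sigma)$ becomes (a huge direct sum of copies of) the simplicial cochain complex of $L$, shifted by one. First I would identify which simplices $\sigma$ of the nerve $\Delta$ satisfy $T_\sigma = pt$. Since the vertices of $\Delta$ are indexed by the maximal tori of $B\Gamma$, i.e. by the maximal simplices of $L$, a simplex $\sigma$ of $\Delta$ corresponds to a collection of maximal simplices of $L$, and $T_\sigma = pt$ precisely when these maximal simplices have empty common intersection in $L$ (their corresponding subtori meet only at the basepoint). Thus $V_\sigma = F[\Gamma/\gG_k]$ on exactly the simplices of $\Delta$ whose associated family of maximal simplices of $L$ has empty intersection, and $V_\sigma = 0$ otherwise. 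Since $V_\sigma$ is (on its support) the constant module $F[\Gamma/\gG_k]$ of rank $[\Gamma:\gG_k]$, we get $\dim_F H_i(\Delta; V_\sigma) = [\Gamma:\gG_k]\cdot \dim_F H_i(\Delta; V_\bullet)$ where $V_\bullet$ is the rank-one version (the subcomplex of $\Delta$ spanned by simplices with $T_\sigma = pt$, with constant coefficients $F$). So it suffices to show $\dim_F H_i(\Delta; V_\bullet) = \bar b_{i-1}(L; F)$.

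Next I would recognize $C_\bullet(\Delta; V_\bullet)$ as the relative chain complex $C_\bullet(\Delta, \Delta'; F)$, where $\Delta'\subset\Delta$ is the full subcomplex on those simplices $\sigma$ with $T_\sigma\neq pt$, equivalently the nerve of the cover of $B\Gamma$ restricted to... — more precisely, $\Delta'$ is the subcomplex of the simplex $\Delta$ consisting of families of maximal simplices of $L$ with nonempty common intersection. This is exactly the nerve of the covering of $L$ by its maximal simplices (each maximal simplex being contractible, and all intersections being contractible-or-empty since $L$ is flag: the intersection of simplices of a flag complex is a simplex or empty). By the nerve lemma, $\Delta'\simeq L$. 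And $\Delta$ itself is a simplex, hence contractible. Therefore the long exact sequence of the pair $(\Delta,\Delta')$ gives $H_i(\Delta,\Delta'; F)\cong \widetilde H_{i-1}(\Delta'; F)\cong \widetilde H_{i-1}(L; F)$, which is $\bar b_{i-1}(L; F)$-dimensional. Combining with the rank-$[\Gamma:\gG_k]$ observation from the previous paragraph finishes the computation.

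The main obstacle, and the step requiring the most care, is the identification $\Delta'\simeq L$ via the nerve lemma: one must check that the covering of $L$ by its maximal simplices has all nonempty intersections contractible, which uses crucially that $L$ is a flag complex (so intersections of simplices are again simplices), and one must match up the combinatorics so that the nerve of this cover is precisely the full subcomplex $\Delta'$ of $\Delta$ — i.e. that a subset of maximal simplices has nonempty intersection in $L$ if and only if the corresponding subtori have nonempty (hence basepoint-containing) intersection, which is immediate from the product structure of $T^V$ and the definition of $T(\sigma)$. A small additional point is the bookkeeping that coefficients in the rank-$[\Gamma:\gG_k]$ module $F[\Gamma/\gG_k]$ simply multiply dimensions, which is routine since the module is constant on the support. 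Everything else is a direct application of the long exact sequence of a pair.
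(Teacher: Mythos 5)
Your argument is correct and is essentially the paper's own proof: you form the subcomplex of $\Delta$ on which $V_\sigma$ vanishes (the paper's $\mathcal L$, your $\Delta'$), identify it as the nerve of the cover of $L$ by maximal simplices (hence $\simeq L$), and use contractibility of $\Delta$ together with the short exact sequence of the pair to get $H_i(\Delta;V_\sigma)\cong\widetilde H_{i-1}(L;F)\otimes V$. The only cosmetic difference is that you first reduce to rank-one coefficients and multiply by $[\Gamma:\gG_k]$ at the end, whereas the paper carries the factor $V$ through the exact sequence.
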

\begin{proof}[Proof] The complex $\Delta$ has a subcomplex $\mathcal L\subset\Delta$ whose simplices are those intersections of tori that consist of more than one point.
	In other words,
	\[
		\mathcal L:=\{\sigma\subset\Delta\mid V_{\sigma}=0\}.
	\]
	This complex $\mathcal L$ is precisely the nerve of the cover of $L$ by maximal simplices, so it is homotopy equivalent to $L$.
	From the definition of $\mathcal L$ and $V_{\sigma}$ we get the exact sequence of chain complexes
	\[
		0\to C_*(\mathcal L;F)\otimes V\to C_*(\Delta;F)\otimes V\to C_*(\Delta;V_{\sigma})\to 0.
	\]
	Since $\Delta$ is a simplex, this implies
	\[
		H_i(\Delta;V_{\sigma})\cong\bar {H}_{i-1}(L;F)\otimes V.
	\]
	This finishes the proof since $V$ is a $[\Gamma:\gG_k]$-dimensional $F$-vector space.
\end{proof}
The theorem follows from this lemma, together with (\ref{collapse}) and (\ref{reduce}). 
Note that we only used normality of the $\gG_k$ in Lemma \ref{vanishlemma}. 
The proof goes through for chains $\gG_k$ where the number of lifts in $B\gG_k$ of standard $n$-tori in $B\gG$ for $n > 0$ grows sublinearly. 
For example, this occurs for normal chains in a finite index subgroup of $\gG$.
\begin{corollary}\label{c:finiteindex}
For a finite index subgroup $H$ of a RAAG $\gG$ we have $$b_i^{(2)}(H;F) = [\gG:H]b_i^{(2)}(\gG;F).$$
\end{corollary}
\begin{proof}
Let $\gG_k \vartriangleleft H$ be a normal chain.
The cover $BH$ of $B\gG$ has $\le [\gG:H]$ lifts of each torus. For each lifted torus $\tilde T_{\gs}$ in $BH$, the number of lifts in $B\gG_k$ is given by $$\frac{[H:\gG_k]}{|\pi_1\tilde T_{\sigma}:\pi_1\tilde T_{\sigma}\cap\gG_k|},$$ which is sublinear. Hence, $b_i^{(2)}(\gG;F) = \lim \frac{b_{i}(B\gG_k; F)}{[\gG:\gG_k]}$, which implies the multiplicativity formula.
\end{proof}

\section{Mayer--Vietoris sequences for \texorpdfstring{$F$-$L^2$}{F-L2}-Betti numbers of Coxeter groups} 

Let $L$ be a flag complex and $W_L$ the corresponding RACG. Look at a decomposition $L = A \cup_{C} B$ where $A, B$ and hence $C$ are full subcomplexes of $L$.
The Coxeter group $W_L$ splits as an amalgamated product $W_L = W_{A} \ast_{W_{C}} W_{B}$, and our goal in this section is to describe relations between $F$-$L^2$-Betti numbers that arise from such splittings.
Everywhere in this section coefficients are in an arbitrary field $F$ and will be omitted to improve readability.

Let $\gG_k$ be a chain of finite index torsion-free normal subgroups with $\bigcap_{k} \gG_k = 1$ (note that any residual normal chain in $W_L$ is eventually torsion-free.)
Let $\Sigma_L$ be the associated Davis complex for $W_L$, and let $Y_L^{k} = \Sigma_L/\gG_k$.

Given any full subcomplex $A$ of $L$, the RACG $W_A$ is a subgroup of $W_L$, and the corresponding Davis complex $\gS_{A}$ is naturally a subcomplex of $\gS_{L}$.
The stabilizer of $\Sigma_A$ in $W_{L}$ is precisely $W_{A}$.
The $W_L$-orbit of $\Sigma_{A}$ in $\Sigma_L$ is a disjoint union of copies of $\Sigma_{A}$.
The intersections of $\gG_k$ with $W_{A}$ give a corresponding chain of finite index subgroups $W_{A} \cap \gG_k \vartriangleleft W_A$.

We let $Y_A^{k}$ denote the image of this orbit in $Y_{L}^{k}$, so that $Y_A^{k}$ is a disjoint union of $\frac{[W_L:\gG_k]}{[W_{A}:\gG_k \cap W_{A}]}$ copies of $\Sigma_{A}/ W_{A} \cap \gG_k$.
It follows that we can compute $b^{(2)}_{i}(W_{A})$ (with respect to the chain $W_{A} \cap \gG_k$) using $Y_{A}^{k}$:
\[
	b^{(2)}_{i}( W_{A}) =\limsup_{k} \frac{b_{i}(Y_{A}^{k})}{[W_{L}: \gG_k ]}.
\]

Suppose that $L=A\cup_CB$ where $A,B$ and hence $C$ are full subcomplexes of $L$.
We then have a decomposition of spaces:
\[
	Y_L^{k} = Y_{A}^{k} \cup_{Y_{C}^{k}} Y_{B}^{k},
\]
and hence a Mayer--Vietoris sequence
\[
	\dots \to H_{i}(Y_{C}^{k}) \to H_{i}(Y_{A}^{k}) \oplus H_{i}(Y_{B}^{k}) \to H_{i}(Y_{L}^{k}) \to \dots
\]

By the above discussion taking $\limsup$ of dimensions of the homology groups in this sequence divided by $[W_{L}: \gG_k]$ gives $ F$-$L^{2}$-Betti numbers of the corresponding Coxeter groups.
Since $\limsup$ is subadditive, it follows that having $b^{(2)}_{i}=0$ for one of the terms gives the usual inequalities between the nearby terms.

The decomposition we will use is when $A = \St(v)$ is the star of a vertex $v$, $B = L-v$ is its complement and $C = \Lk(v)$ is the link of $v$.
In this case, the Mayer--Vietoris sequence leads to the following inequalities.
\begin{lemma}\label{ineq}
	\leavevmode
	\begin{enumerate}
		\item\label{remove1}
		$b^{(2)}_i(W_{L})\leq b_i^{(2)}(W_{L-v})$ \hspace{0.5cm} if \hspace{0.5cm} $b^{(2)}_{i-1}(W_{\Lk(v)})=0$,		
		\item\label{remove2}
		$b^{(2)}_i(W_{L})\geq b_i^{(2)}(W_{L-v})$ \hspace{0.5cm} if \hspace{0.5cm} $b^{(2)}_{i}(W_{\Lk(v)})=0$.
	\end{enumerate}
\end{lemma}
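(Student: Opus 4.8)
The plan is to use the Mayer--Vietoris sequence for the decomposition $L = \St(v) \cup_{\Lk(v)} (L-v)$ together with the fact that $W_{\St(v)}$ has vanishing $F$-$L^2$-Betti numbers in all degrees. To see this last fact, note that $\St(v)$ is a cone with apex $v$, so $W_{\St(v)} = \langle g_v\rangle \times W_{\Lk(v)} \cong (\Z/2) \times W_{\Lk(v)}$; the Davis complex $\Sigma_{\St(v)}$ is the product of $\Sigma_{\Lk(v)}$ with an infinite line (or: the cone point makes $\Sigma_{\St(v)}$ contractible after quotienting by the finite factor), and since $\Z/2$ is finite, the $\gG_k$-covers of $Y_{\St(v)}^k$ are just $[W_{\St(v)}:\gG_k]/2$-fold disjoint unions of a $\Z/2$-quotient of a line bundle over $\Sigma_{\Lk(v)}/(\gG_k \cap W_{\Lk(v)})$ — in any case, $b_i^{(2)}(W_{\St(v)}) = 0$ for all $i$. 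I would state this as a preliminary observation.

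With that in hand, the argument is a routine diagram chase on the long exact sequence
\[
	\cdots \to H_i(Y_C^k) \to H_i(Y_A^k) \oplus H_i(Y_B^k) \to H_i(Y_L^k) \to H_{i-1}(Y_C^k) \to \cdots
\]
where $A = \St(v)$, $B = L - v$, $C = \Lk(v)$. Dividing dimensions by $[W_L:\gG_k]$ and passing to $\limsup$, subadditivity of $\limsup$ gives the inequalities one expects from exactness, with the caveat that $\limsup$ is only subadditive (not additive), so one must be careful about the direction of each inequality. Since $b_i^{(2)}(W_A) = b_i^{(2)}(W_{\St(v)}) = 0$, the relevant portion of the sequence reads, after normalizing,
\[
	H_i(Y_C^k) \to H_i(Y_B^k) \to H_i(Y_L^k) \to H_{i-1}(Y_C^k) \to H_{i-1}(Y_B^k),
\]
with the $Y_A^k$-terms contributing sublinearly. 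For \eqref{remove1}, the hypothesis $b^{(2)}_{i-1}(W_{\Lk(v)}) = 0$ kills (up to sublinear error) the term $H_{i-1}(Y_C^k)$, so $H_i(Y_L^k)$ is a sublinearly-bounded-error quotient of $H_i(Y_B^k)$, giving $b^{(2)}_i(W_L) \leq b^{(2)}_i(W_{L-v})$. For \eqref{remove2}, the hypothesis $b^{(2)}_i(W_{\Lk(v)}) = 0$ kills the term $H_i(Y_C^k)$, so $H_i(Y_B^k)$ injects into $H_i(Y_L^k)$ up to sublinear error, giving $b^{(2)}_i(W_L) \geq b^{(2)}_i(W_{L-v})$.

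The main obstacle — and the only place requiring genuine care — is the bookkeeping with $\limsup$: because $\dim_F$ is additive on short exact sequences but $\limsup$ is merely subadditive, I would phrase the chase in terms of honest finite-dimensional exact sequences at each finite level $k$ (where I can freely use additivity of dimension and the rank--nullity relations along the exact sequence), extract the inequality $b_i(Y_L^k) \le b_i(Y_B^k) + b_i(Y_A^k) + b_{i-1}(Y_C^k)$ (resp. the reverse-type inequality for \eqref{remove2}), and only then divide by $[W_L:\gG_k]$ and take $\limsup$, using that $b_i(Y_A^k)/[W_L:\gG_k] \to 0$ and $b_{i-1}(Y_C^k)/[W_L:\gG_k] \to 0$ (resp. $b_i(Y_C^k)/[W_L:\gG_k]\to 0$) under the stated hypotheses. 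The identification of the normalized limits of $b_i(Y_A^k), b_i(Y_B^k), b_i(Y_C^k)$ with $b_i^{(2)}(W_{\St(v)}), b_i^{(2)}(W_{L-v}), b_i^{(2)}(W_{\Lk(v)})$ respectively is exactly the content of the displayed formula $b^{(2)}_i(W_A) = \limsup b_i(Y_A^k)/[W_L:\gG_k]$ established just before the lemma, so no new input is needed there.
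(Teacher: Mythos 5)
The preliminary claim that $b_i^{(2)}(W_{\St(v)}) = 0$ for all $i$ is \emph{false} in general, and this is a genuine gap. Since $\St(v) = v * \Lk(v)$ is a cone, $W_{\St(v)} = \langle g_v\rangle \times W_{\Lk(v)} \cong (\Z/2) \times W_{\Lk(v)}$ and $\Sigma_{\St(v)} \cong [-1,1] \times \Sigma_{\Lk(v)}$ --- a product with a \emph{compact interval}, not with an infinite line (your sketch conflates the Coxeter situation, extra factor $\Z/2$, with the Artin one, extra factor $\Z$). Because the extra factor is finite rather than infinite, there is no Künneth-type vanishing: one has essentially $b_i^{(2)}(W_{\St(v)}) = \tfrac12\, b_i^{(2)}(W_{\Lk(v)})$, which is nonzero whenever $b_i^{(2)}(W_{\Lk(v)}) \ne 0$. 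Concretely, if $\Lk(v)$ is a triangle with one extra isolated vertex then $W_{\Lk(v)} = (\Z/2)^3 * \Z/2$, its torsion-free finite-index subgroups are free of linearly growing rank, so $b_1^{(2)}(W_{\Lk(v)}) > 0$ and hence $b_1^{(2)}(W_{\St(v)}) > 0$. In that situation the hypothesis of part (1) for $i=1$ (namely $b_0^{(2)}(W_{\Lk(v)})=0$) \emph{is} satisfied, yet your argument cannot discard the $H_1(Y_{\St(v)}^k)$ term, so it does not prove $b_1^{(2)}(W_L) \le b_1^{(2)}(W_{L-v})$. Note also that the hypothesis in part (1) concerns $b_{i-1}^{(2)}(W_{\Lk(v)})$, which gives no control over $b_i^{(2)}(W_{\Lk(v)})$, hence none over $b_i^{(2)}(W_{\St(v)})$.

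The paper sidesteps this entirely with a stronger and more robust observation: the inclusion $i_1 \colon Y^k_{\Lk(v)} \to Y^k_{\St(v)}$ has the form $Y \times \{\pm 1\} \hookrightarrow Y \times [-1,1]$, so $i_{1*}$ is \emph{onto} in every degree. This lets one cancel the $\St(v)$ summand exactly: in the Mayer--Vietoris sequence $H_i(Y_C^k) \xrightarrow{\alpha} H_i(Y_A^k)\oplus H_i(Y_B^k) \xrightarrow{\beta} H_i(Y_L^k) \to H_{i-1}(Y_C^k)$, surjectivity of the first coordinate of $\alpha$ implies that the composite $H_i(Y_B^k) \hookrightarrow H_i(Y_A^k)\oplus H_i(Y_B^k) \twoheadrightarrow \operatorname{im}\beta$ is onto, so $\dim H_i(Y_L^k) \le \dim H_i(Y_B^k) + \dim H_{i-1}(Y_C^k)$ with no $Y_A^k$ term at all. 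Your normalization and $\limsup$ bookkeeping is fine, and your treatment of part (2) does not actually require the bogus vanishing (the bound $\dim H_i(Y_B^k) \le \dim H_i(Y_L^k) + \dim H_i(Y_C^k)$ falls out directly), but part (1) as you have set it up does not go through. Replace the vanishing claim by the surjectivity of $i_{1*}$ and the proof closes.
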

\begin{proof}
	Removing the vertex $v$ from $L$ gives a Mayer--Vietoris sequence
	\[
		\dots \to H_{i}(Y_{\Lk(v)}^{k}) \xrightarrow{i_{1*}\oplus i_{2*}} H_{i}(Y_{\St(v))}^{k}) \oplus H_{i}(Y_{L - v}^{k}) \to H_{i}(Y_L^{k}) \to H_{i-1}(Y_{\Lk(v)}^{k}) \to \dots
	\]
	The map $i_1:Y^k_{\Lk(v)}\to Y^k_{\St(v)}$ is an inclusion of the form $Y\times\{\pm 1\}\hookrightarrow Y\times[-1,1]$, so $i_{1*}$ maps $H_{i}(Y_{\Lk(v)}^{k}) $ onto $H_{i}(Y_{\St(v)}^{k})$.
	The stated inequalities follow from this.
\end{proof}

Iteratively removing vertices leads to the following lemma.
It lets us reduce dimension by passing from complexes to their links.
\begin{lemma}\label{l:evenodd}
	\leavevmode
	\begin{enumerate}
		\item If $A$ is a flag complex with $b^{(2)}_{i}(W_A) \ne 0$, then there exists a vertex $v \in A$ and a full subcomplex $B$ of $\Lk_{A}(v)$ with $b_{i-1}^{(2)}(W_B) \ne 0$.	
		\item If $L$ is a flag complex with $b^{(2)}_{i}(W_{L}) = 0$ and if $A$ is a full subcomplex of $L$ with $b^{(2)}_{i}(W_A) \ne 0$, then there exists a vertex $v \in L $ and a full subcomplex $B$ of $\Lk_{L}(v)$ with $b_{i}^{(2)}(W_B) \ne 0$.
	\end{enumerate}
\end{lemma}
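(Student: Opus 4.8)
The plan is to deduce Lemma~\ref{l:evenodd} from Lemma~\ref{ineq} by an induction on the number of vertices of the ambient complex, successively deleting vertices one at a time and tracking when the relevant $F$-$L^2$-Betti number can or cannot vanish.

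For part (1), I would argue as follows. Let $A$ be a flag complex with $b^{(2)}_i(W_A)\ne 0$. Try to delete vertices of $A$ one at a time. Each time we delete a vertex $v$ from the current complex $L'$, Lemma~\ref{ineq}(\ref{remove1}) tells us that if $b^{(2)}_{i-1}(W_{\Lk_{L'}(v)})=0$ then $b^{(2)}_i(W_{L'-v})\ge b^{(2)}_i(W_{L'})\ne 0$, so nonvanishing in degree $i$ persists. If at every stage and for every vertex we had the hypothesis $b^{(2)}_{i-1}(W_{\Lk(v)})=0$, we could delete all vertices and conclude $b^{(2)}_i(W_\emptyset)\ne 0$, which is absurd since $W_\emptyset$ is trivial (and $i\ge 1$ because $b^{(2)}_0$ of any infinite group vanishes, or one notes $\bar b_{-1}=0$; in any case the empty or point complex has no reduced homology, hence no $F$-$L^2$-Betti numbers in positive degree by the Coxeter analogue of Theorem~\ref{kl2betti}, or simply because a finite group contributes nothing). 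Hence at some stage of the deletion process we reach a full subcomplex $L'\subseteq A$, still with $b^{(2)}_i(W_{L'})\ne 0$, and a vertex $v\in L'$ with $b^{(2)}_{i-1}(W_{\Lk_{L'}(v)})\ne 0$. Since $L'$ is a full subcomplex of $A$, the link $\Lk_{L'}(v)$ is a full subcomplex of $\Lk_A(v)$, so we may take $B=\Lk_{L'}(v)$, which is the desired full subcomplex of $\Lk_A(v)$ with $b^{(2)}_{i-1}(W_B)\ne 0$.

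For part (2), the setup is symmetric but uses the other inequality. We have $b^{(2)}_i(W_L)=0$ and a full subcomplex $A\subseteq L$ with $b^{(2)}_i(W_A)\ne 0$; in particular $A\ne L$, so there is a vertex of $L$ not in $A$. Delete the vertices of $L$ outside $A$ one at a time. At a typical stage we have a full subcomplex $L'$ with $A\subseteq L'\subseteq L$, $b^{(2)}_i(W_{L'})=0$ (this should be maintained: if $v\notin A$ and $b^{(2)}_i(W_{\Lk_{L'}(v)})=0$, then Lemma~\ref{ineq}(\ref{remove2}) gives $b^{(2)}_i(W_{L'-v})\le b^{(2)}_i(W_{L'})=0$, and $A\subseteq L'-v$). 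If for every vertex $v\in L'\setminus A$ we had $b^{(2)}_i(W_{\Lk_{L'}(v)})=0$, we could keep deleting until we reach $A$ itself, forcing $b^{(2)}_i(W_A)=0$, a contradiction. Therefore at some stage there is a vertex $v\in L'\setminus A$ with $b^{(2)}_i(W_{\Lk_{L'}(v)})\ne 0$; since $\Lk_{L'}(v)$ is a full subcomplex of $\Lk_L(v)$, we take $B=\Lk_{L'}(v)$, a full subcomplex of $\Lk_L(v)$ with $b^{(2)}_i(W_B)\ne 0$, as required.

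The main subtlety is not the induction itself but making sure the bookkeeping of ``full subcomplex'' is correct: one needs that when $L'$ is full in $L$ and $v\in L'$, the link $\Lk_{L'}(v)$ is full in $\Lk_L(v)$ (true because links of full subcomplexes are full, and links commute with intersections appropriately), and that Lemma~\ref{ineq} applies with $L'$ in place of $L$, which requires $\St_{L'}(v)$, $L'-v$, $\Lk_{L'}(v)$ to be full in $L'$ — this holds for stars, vertex complements, and links in a flag complex. The only other point to verify is the base case of the induction, namely that once the complex is reduced to $A$ (resp. to the empty complex) the process must stop producing a vanishing-link hypothesis, which is exactly the contradiction we exploit; this is immediate from the standing assumption $b^{(2)}_i(W_A)\ne 0$ (resp. from triviality of $W_\emptyset$). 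I expect essentially no other obstacle, since all the analytic content is already packaged in Lemma~\ref{ineq}.
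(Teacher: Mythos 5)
Your proposal is correct and follows essentially the same strategy as the paper's own proof: iteratively delete vertices using Lemma~\ref{ineq}, observe that if every link encountered had vanishing $F$-$L^2$-Betti number in the relevant degree then the chain of inequalities would terminate at a trivial (point or empty) complex or at $A$ itself, yielding a contradiction, and hence some link must have a nonzero term. The extra care you take with the fullness bookkeeping is sound but was left implicit in the paper.
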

\begin{proof}
	Assume that all the link terms have $b^{(2)}_{i-1}=0$.
	Then removing vertices from $A$ one at a time until we are left with a single vertex leads, by the first part of Lemma \ref{ineq}, to
	\[
		b^{(2)}_i(W_A)\leq b^{(2)}_i(W_{A-v})\leq\dots\leq b^{(2)}_i(W_{pt})=0
	\]
	which contradicts the assumption that $b_i^{(2)}(W_A)>0$.
	This proves the first part.
	
	Now, assume that all the link terms have $b^{(2)}_i=0$.
	Then removing vertices from $L$ one at a time until we are left with $A$ leads, by the second part of Lemma \ref{ineq}, to
	\[
		b^{(2)}_i(W_L)\geq b^{(2)}_i(W_{L-v})\geq\dots\geq b^{(2)}_i(W_{A})
	\]
	which contradicts the assumption $b^{(2)}_i(W_L)=0<b^{(2)}_i(W_A)$.
	This proves the second part.
\end{proof}

\section{Proof of Theorem \ref{nopsinger}} 

We are now ready for the proof of Theorem \ref{nopsinger}.
Fix an odd prime $p$.
Let $L=S^2\cup_pD^3$ be a flag triangulation of a complex obtained by gluing a $3$-disk to a $2$-sphere via a degree $p$ map.
Since $H_{3} (L; \mathbb{F}_{2}) = 0$, by Theorem \ref{vk} the octahedralization $OL$ embeds as a full subcomplex of a flag $PL$-triangulation $T$ of $S^6$.

Using commensurability we choose a common finite index subgroup $N$ of $A_L$ and $W_{OL}$, which is normal in $W_{OL}$.
We fix a torsion-free normal residual chain in $W_{T}$ which intersects $W_{OL}$ inside $N$.
These are abundant as there is an obvious retraction $r:W_T \rightarrow W_{OL}$ so we can intersect any residual chain with $r^{-1}(N)$. 

Since $\gS_{T}$ is a $7$-manifold, the $\Fp$-Singer Conjecture predicts vanishing of $b^{(2)}_{*}(W_{T};\Fp)$, and similarly, vanishing for the links of odd-dimensional simplices.
\begin{proposition}
	The $\Fp$-Singer Conjecture fails either for $T$, or for one of the links of $1$ or $3$-dimensional simplices.
\end{proposition}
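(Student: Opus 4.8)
Recall from the discussion above that $b_4^{(2)}(W_{OL};\Fp)\neq 0$: by Theorem \ref{kl2betti} we have $b_4^{(2)}(A_L;\Fp)=\bar b_3(L;\Fp)$, and for $L=S^2\cup_pD^3$ one computes $\tilde H_3(L;\Fp)\cong\Fp$, so $\bar b_3(L;\Fp)=1$, and the commensurability of $A_L$ with $W_{OL}$ (Lemma \ref{rafacts}) gives $b_4^{(2)}(W_{OL};\Fp)\neq 0$ as well. Recall also that $OL$ is a \emph{full} subcomplex of the flag $PL$-triangulation $T$ of $S^6$. The plan is to push this nonvanishing downward through a chain of vertex links inside $T$ using Lemma \ref{l:evenodd}, arranging the bookkeeping so that it lands on the Davis complex of an \emph{odd}-dimensional manifold, where by Lemma \ref{rafacts} there is no middle dimension and hence any nonzero $\Fp$-$L^2$-Betti number already contradicts the $\Fp$-Singer Conjecture.

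Concretely, I would run a ``double descent'' indexed by stages $j=0,1,2,\dots$. The data at stage $j$ is a simplex $\gs_j$ of $T$ with $\dim\gs_j=2j-1$ (so $\gs_0=\varnothing$), together with a full subcomplex $A_j$ of $L_j:=\Lk_T(\gs_j)$ for which $b_{4-j}^{(2)}(W_{A_j};\Fp)\neq 0$; here $L_j$ is a flag $PL$-triangulation of $S^{6-2j}$, so $\gS_{L_j}$ is a contractible manifold of \emph{odd} dimension $7-2j$. Stage $0$ is the input above with $A_0=OL$. At stage $j$, if $b_{4-j}^{(2)}(W_{L_j};\Fp)\neq 0$ then we are done --- this violates the $\Fp$-Singer Conjecture for $W_{L_j}$, which for $j=0,1,2$ is $W_T$, or $W_{\Lk_T(\gs_1)}$ with $\gs_1$ a $1$-simplex, or $W_{\Lk_T(\gs_2)}$ with $\gs_2$ a $3$-simplex, precisely the three possibilities in the Proposition. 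Otherwise $b_{4-j}^{(2)}(W_{L_j};\Fp)=0$; then Lemma \ref{l:evenodd}(2) applied to $A_j\subseteq L_j$ gives a vertex $v\in L_j$ and a full subcomplex $A_j'\subseteq\Lk_{L_j}(v)$ with $b_{4-j}^{(2)}(W_{A_j'};\Fp)\neq 0$, and Lemma \ref{l:evenodd}(1) applied to $A_j'$ then gives a vertex $w\in A_j'$ and a full subcomplex $A_{j+1}\subseteq\Lk_{A_j'}(w)$ with $b_{3-j}^{(2)}(W_{A_{j+1}};\Fp)\neq 0$. Since a full subcomplex restricts to a full subcomplex of each vertex link, $\Lk_{A_j'}(w)$ is full in $\Lk_{\Lk_{L_j}(v)}(w)=\Lk_{L_j}(\{v,w\})=\Lk_T(\gs_j\cup\{v,w\})$; setting $\gs_{j+1}:=\gs_j\cup\{v,w\}$ (a $(2j+1)$-simplex of $T$) produces the data of stage $j+1$.

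The descent must terminate at stage $0$, $1$, or $2$. Indeed, reaching stage $3$ would produce a full subcomplex $B$ of $L_3=\Lk_T(\gs_3)\cong S^0$ (with $\gs_3$ a $5$-simplex) such that $b_1^{(2)}(W_B;\Fp)\neq 0$; but a subcomplex of $S^0$ has at most two vertices and no edges, so $W_B$ is trivial, $\Z/2$, or the infinite dihedral group, each of which has vanishing first $\Fp$-$L^2$-Betti number. Hence at some stage $j\le 2$ the first alternative must occur, which is exactly the Proposition.

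The step I expect to require the most care is the parity bookkeeping. A naive one-step descent via Lemma \ref{l:evenodd}(2) would walk through links of simplices of every dimension, and the link of an even-dimensional simplex of $T$ has an \emph{even}-dimensional Davis complex, with an honest middle dimension in which a nonzero $\Fp$-$L^2$-Betti number need not contradict $\Fp$-Singer. Pairing the two halves of Lemma \ref{l:evenodd} --- part (2) moves to a vertex link at the same degree, part (1) moves to a vertex link one degree lower --- lets the descent step over the even-dimensional links and descend only along the odd-dimensional ones $\gS_{\Lk_T(\gs)}$ with $\dim\gs$ odd; and it is precisely this pairing that keeps the relevant degree equal to $4-j$ while the ambient sphere shrinks from $S^6$ to $S^{6-2j}$, so that the degree always sits just above the (nonexistent, half-integer) middle dimension. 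The other ingredients --- that the link of a full subcomplex is full in the link, that an iterated vertex link is the link of the joined simplices, and the triviality of $W_B$ for $B\subseteq S^0$ --- are routine.
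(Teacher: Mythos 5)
Your proof is correct and follows essentially the same route as the paper's: assume $\Fp$-Singer holds for $T$, then alternately apply parts (2) and (1) of Lemma \ref{l:evenodd} to descend from $OL\subset T$ through links of odd-dimensional simplices $\Lk_T(\gs^1)\approx S^4$ and $\Lk_T(\gs^3)\approx S^2$, terminating because a full subcomplex of $S^0$ cannot carry a nonzero $b_1^{(2)}$. The only difference is presentational -- you package the paper's two explicit iterations into an inductive ``double descent'' with uniform indexing, and you spell out the final impossibility (trivial, $\Z/2$, or $D_\infty$), but the underlying argument is identical.
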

\begin{proof}
	Suppose the $\Fp$-Singer Conjecture holds for $T$, and in particular $b^{(2)}_{4}(W_{T};\Fp)= 0$.
Since the chain in $W_{OL}$ is contained in $N$ and $N$ has finite index in $A_{L}$, Corollary \ref{c:finiteindex} implies $b^{(2)}_{4}(W_{OL};\Fp) \neq 0$.
	By the second part of Lemma \ref{l:evenodd} applied to $OL$ and $T$, there is a full subcomplex $B$ of $\Lk_{T}(v)$ with $b_4^{(2)}(W_B; \Fp) \ne 0$.
	Now we apply the first part of Lemma \ref{l:evenodd} to $B$, to get a full subcomplex $C$ of $\Lk_{B}(u)$ with $b_3^{(2)}(W_{C}; \Fp) \ne 0$.
	Note that $\Lk_{B}(u)$, and therefore $C$, is a full subcomplex of $\Lk_{\Lk_{T}(v)}(u)=\Lk_{T}(uv)\approx S^{4}$.
	
	If the $\Fp$-Singer Conjecture still holds for $\Lk_{T}(uv)$, we can repeat this argument to get get a full subcomplex $D$ of $\Lk_{T}(\gs^{3})\approx S^{2}$ with $b_2^{(2)}(W_{D}; \Fp) \ne 0$.
	Now, the $\Fp$-Singer Conjecture must fail for $\Lk_{T}(\gs^{3})$, because repeating this argument once more produces a subcomplex of $S^{0}$ with $b^{(2)}_{1} \ne 0$, which is clearly impossible.
\end{proof}

It follows that the $\Fp$-Singer Conjecture must fail in at least one of the dimensions $3$, $5$, or $7$.
Since a closed surface $S_g$ with $g \ge 2$ has $b_1^{(2)}(\pi_1(S_g); \Fp) \ne 0$, taking cartesian products between surface groups and our counterexamples gives, via the K\"unneth formula, counterexamples in all odd dimensions $\geq 7$ and all even dimensions $\geq 14$.
\begin{remark}
	
	The reason why we used a $3$-dimensional complex $S^2\cup_p D^3$ instead of a $2$-dimensional complex $S^1\cup_p D^2$ above is twofold.
	First, for $2$-dimensional complexes there are other obstructions to embedding in $S^{4}$ besides the classical van Kampen obstruction~\cite{fkt}.
	Second, in codimension $2$ there is a problem of extending a given triangulation on the complex to a triangulation of $S^{4}$ (the embedding might be locally knotted.)
	If for a flag triangulation $L$ of $S^1\cup_pD^2$ one can exhibit its octahedralization $OL$ as a subcomplex of $S^4$, then our method would yield a $5$-dimensional counterexample to the $\Fp$-Singer Conjecture.
\end{remark}

\begin{remark}
If a manifold $M$ fibers over $S^1$, a cell counting argument shows $b^{(2)}_\ast(\pi_1(M), \Fp) = 0$ for many residual chains of finite index subgroups $\{\gG_k\}$ (the chains whose images in $\pi_1(S^1)$ are also residual). 
In particular, our odd-dimensional counterexamples to the $\Fp$-Singer Conjecture will not even virtually fiber over the circle, as our inductive arguments give nontriviality of $b^{(2)}_\ast(\pi_1(M), \Fp)$ for a sufficiently large number of chains. 
This contradicts a conjecture of Davis and the second author  \cite{do01}*{Conjecture 14.1.5}. 

\end{remark}

\begin{bibdiv}
	\begin{biblist}
 \bib{abn}{article} {
	author = {Abert, Miklos},
	author = {Gelander, Tsachik},
	author = {Nikolov, Nikolay},
	title = {Rank, combinatorial cost and homology torsion growth in higher rank lattices}, Journal = {Duke Math J.}, Volume = {166}, Year = {2016}, Number = {15}, Pages = {2925--2964},

 }

\bib{ados}{article} {
	author = {Avramidi, Grigori},
	author = {Davis, Michael W.},
	author = {Okun, Boris},
	author = {Schreve, Kevin},
	title = {Action dimension of right-angled Artin groups}, Journal = {Bull. of the London Math. Society}, 
	volume = {48}, 
	year = {2016}, 
	number = {1}, 
	pages = {115--126},
}

\bib{berg}{incollection} {
    AUTHOR = {Bergeron, Nicolas},
     TITLE = {Torsion homology growth in arithmetic groups},
 BOOKTITLE = {European {C}ongress of {M}athematics},
     PAGES = {263--287},
 PUBLISHER = {Eur. Math. Soc., Z\"{u}rich},
      YEAR = {2018},
   MRCLASS = {11F41 (14G35)},
  MRNUMBER = {3887771},
MRREVIEWER = {Shuichiro Takeda},
}

\bib{bv}{article} {
	author = {Bergeron, Nicolas},
	author = {Venkatesh, Akshay},
	journal = {J. Inst. Math. Jussieu}, 
	title = {The asymptotic growth of torsion homology for arithmetic groups},
	volume = {12}, 
	year = {2013}, 
	number = {2}, 
	pages = {391--447} 
}

\bib{brown}{book}{
	author = {Brown, Kenneth},
	title = {Cohomology of groups},
	series = {Graduate Texts in Mathematics},
	volume = {87}, publisher={Springer, New York, NY},
	date = {1982}, }

\bib{ce}{article} {
	author = {Calegari, Frank},
	author = {Emerton, Matthew},
	title = {Mod-$p$ cohomology growth in $p$-adic analytic towers of $3$-manifolds }, 
	journal = {Groups, Geom. Dyn.}, Volume = {5}, Year = {2011}, Number = {2}, Pages = {355--366}, }

\bib{dbook}{book}{
	author = {Davis, Michael W.},
	title = {The geometry and topology of Coxeter groups},
	series = {London Mathematical Society Monographs Series},
	volume = {32}, publisher={Princeton University Press, Princeton, NJ},
	date = {2008},
	pages = {xvi+584}
}

 \bib{dj00}{article}{
	author = {Davis, Michael W.},
	author = {Januszkiewicz, Tadeusz},
	title = {Right-angled {A}rtin groups are commensurable with right-angled {C}oxeter groups},
	journal = {J. Pure Appl. Algebra},
	volume = {153}, year = {2000},
	number = {3},
	pages = {229--235}
}

 \bib{davisleary}{article} {
	author = {Davis, M. W.},
	author = {Leary, I. J.},
	title = {The {$l^2$}-cohomology of {A}rtin groups},
	journal = {J. London Math. Soc. (2)},
	volume = {68}, 
	year = {2003},
	number = {2},
	pages = {493--510}
}

 \bib{do01}{article}{
	author = {Davis, Michael W.},
	author = {Okun, Boris},
	title = {Vanishing theorems and conjectures for the {$\ell^2$}-homology of right-angled {C}oxeter groups},
	journal = {Geom. Topol.},
	volume = {5}, 
	year = {2001},
	pages = {7--74}
}

\bib{fkt}{article}{
	author = {Freedman, Michael H.},
	author = {Krushkal, Vyacheslav S.},
	author = {Teichner, Peter},
	title = {van {K}ampen's embedding obstruction is incomplete for {$2$}-complexes in {${\bf R}^4$}},
	journal = {Math. Res. Lett.},
	volume = {1}, YEAR = {1994},
	number = {2},
	pages = {167--176}
}

\bib{luck}{article}{
	author = {L{\"u}ck, Wolfgang},
	title = {Approximating $L^2$-invariants by their finite-dimensional analogues},
	journal = {Geom. Funct. Anal.}, 
	year = {1994},
	volume = {4},
	pages = {455--481}
}

\bib{luck2}{article}{
	author = {L\"{u}ck, Wolfgang},
	title = {Approximating $L^2$-invariants and homology growth},
	journal = {Geom. Funct. Anal.},
	volume = {23}, 
	year = {2013},
	number = {2},
	pages = {622--663}
}

\bib{luck1}{article}{
	author = {L{\"u}ck, Wolfgang},
	title = {Survey on approximating $L^2$-invariants by their classical counterparts: Betti numbers, torsion invariants, and homology growth},
	journal = {EMSS}, 
	year = {2016},
	volume = {3},
	pages = {269--344}
}

 \bib{os}{article}{
	author = {Okun, Boris},
	author = {Schreve, Kevin},
	title = {The $L^2$-cohomology of groups with hierarchies},
	journal = {Alg. Geom.Top},
	volume = {16}, YEAR = {2016},
	pages = {2549--2649}
}

 \end{biblist}

\end{bibdiv}

\end{document}